\documentclass[11pt, a4paper]{article}

\usepackage{color,url,latexsym,amssymb,amsthm,amsmath,amsxtra,amsfonts}
\usepackage{graphicx}
\usepackage{amsfonts}
\usepackage{amsbsy}
\usepackage{amssymb}
\usepackage{amsmath}
\usepackage{amsthm}
\theoremstyle{plain}
\newtheorem{theorem}{Theorem}[section]
\newtheorem{lemma}{Lemma}[section]
\newtheorem{corollary}{Corollary}[section]
\newtheorem{proposition}{Proposition}[section]
\theoremstyle{definition}
\newtheorem{definition}{Definition}[section]
\newtheorem{remark}{Remark}[section]

\topmargin 0.2cm \oddsidemargin 0.1cm \evensidemargin 0.1cm
\textwidth 14.9cm \textheight 20cm

\begin{document}

\title{A vertical Liouville subfoliation on the cotangent bundle of a Cartan space and some related structures}
\author{Cristian Ida and Adelina Manea}
\date{}
\maketitle

\begin{abstract}
In this paper we study some problems related to a vertical Liouville distribution (called vertical Liouville-Hamilton distribution) on the cotangent bundle of a Cartan space. We study the existence of some linear connections of Vr\u{a}nceanu type on Cartan spaces related to some foliated structures. Also, we identify a certain $(n,2n-1)$--codimensional subfoliation $(\mathcal{F}_V,\mathcal{F}_{C^*})$ on $T^*M_0$ given by vertical foliation $\mathcal{F}_V$ and the line foliation $\mathcal{F}_{C^*}$ spanned by the vertical Liouville-Hamilton vector field $C^*$ and we give a triplet of basic connections adapted to this subfoliation. Finally, using the vertical Liouville foliation $\mathcal{F}_{V_{C^*}}$ and the natural almost complex structure on $T^*M_0$ we study some aspects concerning the cohomology of $c$--indicatrix cotangent bundle. 
\end{abstract}

\medskip

\begin{flushleft}
\strut \textbf{2010 Mathematics Subject Classification}: 53B40, 53C12, 53C60.

\textbf{Key Words}: Cartan space, foliation, basic connection, cohomology. 
\end{flushleft}

\section{Introduction and preliminaries}
\setcounter{equation}{0}

\subsection{Introduction}
The study of interrelations between the geometry of some natural foliations on the tangent manifold of a Finsler space and the geometry of the Finsler space itself was initiated and intensively studied by Bejancu and Farran \cite{Bej}. The main idea of their paper is to emphasize the importance of some foliations which exist on the tangent bundle of a Finsler space $(M, F)$, in studying the
differential geometry of $(M, F)$ itself. In this direction, in the last decades, the geometrical aspects determined by these foliations on the tangent manifold of a Finsler space were studied \cite{C-C, Ma-I, P-T-Z, P-N}. On the other hand in a very recent paper \cite{A-R1} a similar study of some natural foliations in cotangent bundle $T^*M$ of a Cartan space $(M, K)$ is given. It is shown that geometry of these foliations is closely related to the geometry of the Cartan space $(M, K)$ itself. This approach is used to obtain new characterizations of Cartan spaces with negative constant curvature.

The aim of our paper is to continue the study of these foliations on the cotangent bundle of a Cartan space $(M,K)$ from more points of view as: the study of some linear connections of Vr\u{a}nceanu type on Cartan spaces related to foliated structures, the study of a certain $(n,2n-1)$--codimensional subfoliation $(\mathcal{F}_V,\mathcal{F}_{C^*})$ on $T^*M_0$ given by vertical foliation $\mathcal{F}_V$ and the line foliation $\mathcal{F}_{C^*}$ spanned by the vertical Liouville-Hamilton vector field $C^*$ and the existence of a triplet basic connections adapted to this subfoliation. Also, some aspects concerning the cohomology of $c$--indicatrix cotangent bundle are investigated. The notions are introduced by analogy with corresponding notions on the tangent manifold of a Finsler space, \cite{I-P, Ma-I}. 

The paper is organized as  follows: In the preliminary subsection we briefly recall some basic facts on the geometry of a Cartan space $(M,K)$ and we present the  almost K\"{a}hlerian model $(T^*M_0,G,J)$ of the cotangent manifold $T^*M_0=T^*M-\{{\rm zero \,\,section}\}$ together with the Riemannian metric $G$ given by Sasaki type lift of the fundamental metric tensor $g^{ij}=\frac{1}{2}\frac{\partial^2K^2}{\partial p_i\partial p_j}$ and with the natural almost complex structure $J$. In the second section, following an argument inspired from \cite{B-F1}, we define a vertical Liouville distribution $V_{C^*}$ on $T^*M_0$ as the complementary orthogonal distribution in the vertical distribution  $V(T^*M_0)$ to the line distribution spanned by the vertical Liouville-Hamilton vector field $C^*=p_i\frac{\partial}{\partial p_i}$ and we prove that the distribution $V_{C^*}$ is an integrable one. Also, we find geometric properties of both leaves of  vertical Liouville distribution $V_{C^*}$ and the vertical distribution $V(T^*M_0)$. We notice that the vertical Liouville distribution $V_{C^*}$ will be an important tool in the study of future problems in this paper.  In the third section, following some ideas from \cite{B-F}, we present some linear connections of Vr\u{a}nceanu type on a Cartan space, related with the vertical foliation $\mathcal{F}_V$ and vertical Hamilton-Liouville foliation $\mathcal{F}_{V_{C^*}}$ on $T^*M_0$. In the fourth section, following \cite{C-M}, we briefly recall the notion of a $(q_1,q_2)$-codimensional subfoliation on a manifold and we identify a $(n,2n-1)$-codimensional subfoliation $(\mathcal{F}_V,\mathcal{F}_{C^*})$ on the cotangent manifold $T^*M_0$ of a Cartan space $(M,K)$, where $\mathcal{F}_V$ is the vertical foliation and $\mathcal{F}_{C^*}$ is the line foliation spanned by the vertical Liouville-Hamilton vector field $C^*$. Firstly, we make a general approach about basic connections on the normal bundles related to this subfoliation and next a triple of adapted basic connections with respect to this subfoliation is given. A similar study on the tangent manifold of a Finsler space $(M,F)$ is given in \cite{Ma-I}. In the last section, using the vertical Liouville-Hamilton vector field $C^*$ and the natural almost complex structure $J$ on $T^*M_0$, we give an adapted basis in $T(T^*M_0)$. Next we prove that the $c$--indicatrix cotangent bundle $I(M,K)(c)$ of $(M,K)$ is a $CR$--submanifold of the almost K\"{a}hlerian manifold $(T^*M_0,G,J)$ and we study some cohomological properties of $I(M,K)(c)$ in relation with classical cohomology of $CR$-submanifolds, \cite{C1}.

\subsection{Preliminaries and notations}
In this subsection we briefly recall some basic facts from the geometry of a Cartan space $(M,K)$. For more see \cite{An, M1, M2, M3, M-H-S}.

Let $M$ be an $n$-dimensional $C^{\infty}$ manifold and $\pi^*:T^*M\rightarrow M$ its cotangent bundle. If $(x^i)$, $i=1,\ldots,n$ are local coordinates in a local chart $U$ on $M$, then $(x^i,p_i)$, $i=1,\ldots,n$ will be taken as local coordinates in the local chart $(\pi^*)^{-1}(U)$ on $T^*M$ with the momenta $(p_i)$ provided by $p=p_idx^i$ where $p\in T^*_xM$, $x\in M$ and $\{dx^i\}$ is the natural basis of $T^*_xM$. The indices $i,j,k,\ldots$ will run from $1$ to $n$ and the Einstein convention on summation will be used. 

Let us consider $\left\{\frac{\partial}{\partial x^i},\frac{\partial}{\partial p_i}\right\}$, $i=1,\ldots,n$ the natural basis in $T_{(x,p)}(T^*M)$ and $\{dx^i,dp_i\}$ the dual basis of it.  The kernel $V_{(x,p)}(T^*M)$ of the differential $d\pi^*:T_{(x,p)}(T^*M)\rightarrow T_xM$ is called the \textit{vertical} subspace of $T_{(x,p)}(T^*M)$ and the mapping $(x,p)\mapsto V_{(x,p)}(T^*M)$ is a regular distribution on $T^*M$ called the \textit{vertical distribution}. This is integrable and defines the vertical foliation $\mathcal{F}_V$ with the leaves characterized by $x^k=constant$ and it is locally spanned by $\left\{\frac{\partial}{\partial p_i}\right\}$. The vector field $C^*=p_i\frac{\partial}{\partial p_i}$ is called the \textit{vertical Liouville-Hamilton} vector field and $\omega=p_idx^i$ is called the Liouville $1$--form on $T^*M$. Then $\Omega=d\omega=dp_i\wedge dx^i$ is the canonical symplectic structure on $T^*M$. For an easier handling of the geometrical objects
on $T^*M$ it is usual to consider a supplementary distribution to the vertical distribution, $(x,p)\mapsto H_{(x,p)}(T^*M)$, called the \textit{horizontal} distribution and to report all geometrical objects on $T^*M$ to the decomposition
\begin{equation}
T_{(x,p)}(T^*M)=H_{(x,p)}(T^*M)\oplus V_{(x,p)}(T^*M).
\label{I4}
\end{equation}
The horizontal distribution is taken as being locally spanned by the local vector fields
\begin{equation}
\frac{\delta}{\delta x^i}:=\frac{\partial}{\partial x^i}+N_{ij}(x,p)\frac{\partial}{\partial p_j}.
\label{I5}
\end{equation}
The horizontal distribution is called also a \textit{nonlinear connection} on $T^*M$ and the functions $N_{ij}$ are called the local coefficients of this nonlinear connection. It is important to note that
any regular Hamiltonian on $T^*M$ determines a nonlinear connection whose local coefficients
verify $N_{ij}=N_{ji}$. The basis $\left\{\frac{\delta}{\delta x^i},\frac{\partial}{\partial p_i}\right\}$ is adapted to the decomposition \eqref{I4}. The dual of it is $\{dx^i,\delta p_i:=dp_i-N_{ji}dx^j\}$.

According to \cite{M-H-S} a \textit{Cartan structure} on $M$ is a function $K:T^*M\rightarrow[0,\infty)$ which has the following properties:
\begin{enumerate}
\item[i)] $K$ is $C^{\infty}$ on $T^*M_0:=T^*M-\{{\rm zero\,\,section}\}$;
\item[ii)] $K(x,\lambda p)=\lambda K(x,p)$ for all $\lambda>0$;
\item[iii)] the $n\times n$ matrix $(g^{ij})$, where $g^{ij}=\frac{1}{2}\frac{\partial^2 K^2}{\partial p_i\partial p_j}$, is positive definite at all points of $T^*M_0$. 
\end{enumerate}
We notice that in fact $K(x,p)>0$, whenever $p\neq0$.
\begin{definition}
The pair $(M,K)$ is called a Cartan space.
\end{definition}
Let us put
\begin{equation}
p^i=\frac{1}{2}\frac{\partial K^2}{\partial p_i}\,,\,C^{ijk}=-\frac{1}{4}\frac{\partial^3K^2}{\partial p_i\partial p_j\partial p_k}.
\label{I6}
\end{equation}
The properties of $K$ imply that
\begin{equation}
p^i=g^{ij}p_j\,,\,p_i=g_{ij}p^j\,,\,K^2=g^{ij}p_ip_j=p_ip^i\,,\,C^{ijk}p_k=C^{ikj}p_k=C^{kij}p_k=0,
\label{I7}
\end{equation}
where $(g_{ij})$ is the inverse matrix of $(g^{ji})$.

One considers the formal Christoffel symbols by
\begin{equation}
\gamma^i_{jk}(x,p):=\frac{1}{2}g^{is}\left(\frac{\partial g_{js}}{\partial x^k}+\frac{\partial g_{sk}}{\partial x^j}-\frac{\partial g_{jk}}{\partial x^s}\right),
\label{I8}
\end{equation}
and the contractions $\gamma^0_{jk}(x,p):=\gamma^i_{jk}(x,p)p_i$, $\gamma^0_{j0}:=\gamma^i_{jk}p_ip^k$. Then the functions
\begin{equation}
N_{ij}(x,p)=\gamma^0_{ij}(x,p)-\frac{1}{2}\gamma^0_{h0}(x,p)\frac{\partial g_{ij}}{\partial p_h}(x,p),
\label{I9}
\end{equation}
define a nonlinear connection on $T^*M$. This nonlinear connection was discovered by  Miron \cite{M3} and is called the \textit{canonical nonlinear connection} of $(M,K)$. We also  notice that the coefficients  from \eqref{I9} satisfies $N_{ij}(x,p)=N_{ji}(x,p)$ and are positively homogeneous of degree $1$ in momenta. For details we refer to Ch. 6 in \cite{M-H-S}.  Thus a decomposition \eqref{I4} holds. From now on we shall use only the canonical nonlinear connection given by \eqref{I9}.

To a Cartan space $(M,K)$ we can associate some important geometrical object fields on the manifold $T^*M_0$. Namely, the $N$-lift $G$ of the fundamental tensor $g^{ij}$, the almost complex structure $J$, etc. If $N$ is the canonical nonlinear of $(M,K)$, thus $(G,J)$ determine an almost Hermitian structure, which is derived only from the fundamental function $K$ of the Cartan space.

The $N$-lift of the fundamental tensor field $g^{ij}$ of the space $(M,K)$ is defined by
\begin{equation}
G=g_{ij}dx^i\otimes dx^j+g^{ij}\delta p_i\otimes\delta p_j
\label{I10}
\end{equation}
and there is a natural almost complex structure $J$ on $T^*M_0$ which is locally given by
\begin{equation}
J=-g_{ij}\frac{\partial}{\partial p_i}\otimes dx^j+g^{ij}\frac{\delta}{\delta x^i}\otimes \delta p_j\,,\,J\left(\frac{\delta}{\delta x^i}\right)=-g_{ij}\frac{\partial}{\partial p_j}\,,\,J\left(\frac{\partial}{\partial p_i}\right)=g^{ij}\frac{\delta}{\delta x^j}.
\label{I11}
\end{equation}
Thus, according to \cite{M-H-S}, $(T^*M_0,G,J)$ has a model of almost K\"{a}hlerian manifold with the fundamental form $\Omega$ given by $\Omega(X,Y)=G(JX,Y)$ and locally expressed by
\begin{equation}
\Omega=\delta p_i\wedge dx^i=dp_i\wedge dx^i.
\label{I12}
\end{equation}

\section{A vertical Liouville distribution on $T^*M_0$}
\setcounter{equation}{0}
Following an argument inspired from \cite{B-F1} we define a vertical Liouville distribution on $T^*M_0$ as the complementary orthogonal distribution in $V(T^*M_0)$ to the line distribution spanned by the vertical Liouville-Hamilton vector field $C^*$ and we prove that this distribution is an integrable one.

By \eqref{I7} we have
\begin{equation}
G(C^*,C^*)=K^2.
\label{II1}
\end{equation}
By means of $G$ and $C^*$, we define the vertical one form $\zeta$ by
\begin{equation}
\zeta(X)=\frac{1}{K}G(X,C^*)\,\,\forall\,X\in\Gamma(V(T^*M_0)).
\label{II2}
\end{equation}
Denote by $\left\{C^*\right\}$ the line vector bundle over $T^*M_0$ spanned by $C^*$ and define the \textit{Liouville distribution} as the complementary orthogonal distribution $V_{C^*}$ to $\left\{C^*\right\}$ in $V(T^*M_0)$ with
respect to $G$. Hence, $V_{C^*}$ is defined by $\zeta$, that is, we have
\begin{equation}
\Gamma\left(V_{C^*}\right)=\{X\in\Gamma(V(T^*M_0))\,:\,\zeta(X)=0\}.
\label{II3}
\end{equation}
Thus, any vertical vector field $X=X_i\frac{\partial}{\partial p_i}$ can be expressed as follows:
\begin{equation}
X=PX+\frac{1}{K}\zeta(X)C^*,
\label{II4}
\end{equation}
where $P$ is the projection morphism of $V(T^*M_0)$ on $V_{C^*}$. By direct calculations, we obtain
\begin{equation}
G(X,PY)=G(PX,PY)=G(X,Y)-\zeta(X)\zeta(Y),\,\,\forall\,X,Y\in\Gamma(V(T^*M_0)).
\label{II5}
\end{equation}
Then the local components of $\zeta$ and $P$ with respect to the basis $\{\delta p_i\}$ and $\left\{\delta p_i\otimes\frac{\partial}{\partial p_i}\right\}$, respectively, are given by
\begin{equation}
\zeta^i=\frac{p^i}{K}\,,\,P_j^i=\delta^i_j-\frac{\zeta^ip_j}{K},
\label{II6}
\end{equation}
where $\delta^i_j$ are the components of the Kronecker delta.
\begin{remark}
\label{rr0}
Let us consider the horizontal projector $h =\frac{\delta}{\delta x^i}\otimes dx^i$
and the vertical
projector $v =\frac{\partial }{\partial p_i}\otimes \delta p_i $, with respect to the canonical nonlinear connection $N_{ij}$ . Then
by Proposition 6.4.3, pag. 147 \cite{M-H-S}, we have $\frac{\delta K}{\delta x^i}=0$ which implies $d_hK = 0$. Then the
vertical 1-form $\zeta$ can be given as follows: $\zeta = d_vK = dK$.
\end{remark}
\begin{remark}
\label{rr1}
The projector $P$ can be related in terms of the \textit{angular metric tensor} of the Cartan space $(M,K)$, which is defined as in Finsler geometry as follows
\begin{equation}
\label{r1}
h^{ij}=g^{ij}-\frac{1}{K^2}p^ip^j.
\end{equation}
More exactly,  by \eqref{II6} and \eqref{r1} it is easy to see that $P^i_j=g_{jk}h^{ki}$. Also, in a similar manner
as in Finsler geometry \cite{C-M-S} it follows that the regularity condition for $g_{ij}$ is equivalent
with the fact that $h_{ij}$ is positive quasi-definite.
\end{remark}
Now, by \eqref{II6} the rank of the projector $P$ is $n-1$ and taking into account that $P^i_jp_i=0$ it follows that $V_{C^*}$ is an $(n-1)$--dimensional vertical sub-distribution, orthogonal to $C^*$ in $V(T^*M_0)$, locally spanned by the vertical vector fields $\left\{\frac{\overline{\partial}}{\overline{\partial}p_i}\right\}$, $i=1,\ldots,n$, where
\begin{equation}
\label{r2}
\frac{\overline{\partial}}{\overline{\partial}p_i}=\frac{\partial}{\partial p_i}-\frac{p^i}{K^2}C^*=P^i_j\frac{\partial}{\partial p_j}.
\end{equation}
Taking  into account that $C^*(K)=K$ and $K\frac{\partial K}{\partial p_i}=p^i$ (see Proposition 6.4.3, pag. 147
\cite{M-H-S}), we obtain that an important property of the vertical Liouville sub-distribution $V_{C^*}$ is the following: 

\textit{For any} $Y\in\Gamma(V_{C^*})$ \textit{we have} $Y(K)=0$.
\begin{theorem}
\label{integrabil}
The vertical Liouville-Hamilton distribution $V_{C^*}$ is integrable and hence it defines a foliation on $T^*M_0$ denoted by $\mathcal{F}_{V_{C^*}}$.
\end{theorem}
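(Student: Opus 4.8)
The strategy is to show that the distribution $\mathcal{L}_{C^*}$ is involutive, i.e. that $[X,Y]\in\Gamma(\mathcal{L}_{C^*})$ whenever $X,Y\in\Gamma(\mathcal{L}_{C^*})$, and then invoke the Frobenius theorem. Since $\mathcal{L}_{C^*}$ is a subdistribution of the vertical distribution $V(T^*M_0)$, which is already integrable with leaves $\{x^k=\text{const}\}$ spanned by $\{\partial/\partial p_i\}$, any bracket $[X,Y]$ of vertical vector fields is again vertical. Thus it suffices to prove that $\zeta([X,Y])=0$, i.e. using \eqref{II2}, that $G([X,Y],C^*)=0$ for all $X,Y\in\Gamma(\mathcal{L}_{C^*})$. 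Equivalently, in local components, writing $X=X_i\frac{\partial}{\partial p_i}$, $Y=Y_i\frac{\partial}{\partial p_i}$ with $X_ip^i=Y_ip^i=0$ (the condition $\zeta(X)=\zeta(Y)=0$ via \eqref{II6}), one must check that the vertical vector field $[X,Y]=\left(X_j\frac{\partial Y_i}{\partial p_j}-Y_j\frac{\partial X_i}{\partial p_j}\right)\frac{\partial}{\partial p_i}$ satisfies $\left(X_j\frac{\partial Y_i}{\partial p_j}-Y_j\frac{\partial X_i}{\partial p_j}\right)p^i=0$.

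First I would record the key differentiation identity for the vertical derivatives of $p^i$. Since $p^i=g^{ij}p_j$ and $g^{ij}=\frac12\frac{\partial^2K^2}{\partial p_i\partial p_j}$, differentiating gives $\frac{\partial p^i}{\partial p_j}=g^{ij}+\frac{\partial g^{ij}}{\partial p_k}p_k = g^{ij}-2C^{ijk}p_k$ by \eqref{I6}, and then \eqref{I7} ($C^{ijk}p_k=0$) yields the clean identity $\frac{\partial p^i}{\partial p_j}=g^{ij}$. Next I differentiate the constraint $Y_ip^i=0$ with respect to $p_j$ (legitimate along a vertical leaf, where $x$ is fixed): $\frac{\partial Y_i}{\partial p_j}p^i + Y_i g^{ij}=0$, hence $\frac{\partial Y_i}{\partial p_j}p^i = -Y_i g^{ij} = -Y^j$, and similarly $\frac{\partial X_i}{\partial p_j}p^i = -X^j$. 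Contracting the bracket components with $p^i$ then gives
\begin{equation}
\left(X_j\frac{\partial Y_i}{\partial p_j}-Y_j\frac{\partial X_i}{\partial p_j}\right)p^i = -X_jY^j + Y_jX^j = -X_jg^{jk}Y_k + Y_jg^{jk}X_k = 0
\label{II7}
\end{equation}
by symmetry of $g^{jk}$. This shows $\zeta([X,Y])=0$, so $[X,Y]\in\Gamma(\mathcal{L}_{C^*})$, and Frobenius' theorem gives the foliation $\mathcal{F}_{C^*}$.

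The only subtlety, and the step I would be most careful about, is the differentiation of the constraint functions: one must make sure the vector fields $X,Y$ are taken with components $X_i,Y_i$ that are genuine functions on $T^*M_0$ (not just pointwise data), so that expressions like $\frac{\partial Y_i}{\partial p_j}$ make sense, and that the constraint $\zeta(X)=0$ is differentiated along the vertical leaf where the $x^k$ are held constant — which is exactly the setting in which the bracket of two vertical fields is computed. Given that, the computation is the short chain \eqref{II7}; the essential geometric input is just the homogeneity-type identities \eqref{I7}, packaged as $\partial p^i/\partial p_j = g^{ij}$. One may alternatively phrase the whole argument invariantly: $\mathcal{L}_{C^*}=\ker\zeta\cap V(T^*M_0)$ is integrable iff $d\zeta$ vanishes on $\mathcal{L}_{C^*}\times\mathcal{L}_{C^*}$ modulo $\zeta$, and a direct computation of $d\zeta$ restricted to vertical arguments reduces to the same identity; but the component computation above is the most transparent.
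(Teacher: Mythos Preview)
Your proof is correct and follows essentially the same approach as the paper: both reduce to showing $G([X,Y],C^*)=0$ for $X,Y\in\Gamma(\mathcal{L}_{C^*})$ by differentiating the constraint $X_ip^i=0$ (equivalently $g^{ij}X_ip_j=0$) along the fibre, using the homogeneity identity $\partial p^i/\partial p_j=g^{ij}$ (equivalently $\frac{\partial g^{ij}}{\partial p_k}p_j=0$), and concluding by the symmetry of $g^{ij}$. The only cosmetic difference is that the paper works directly with $g^{ij}p_j$ while you package the same computation via $p^i$ and $\partial p^i/\partial p_j$.
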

\begin{proof}
The proof it follows using an argument similar to  Theorem 3.1 from \cite{B-F1}.
\end{proof}
\begin{remark}
The proof of Theorem \ref{integrabil} can be also obtained using an argument similar to \cite{B-Mu}. More exactly, if $\frac{\overline{\partial}}{\overline{\partial}p_i}$, $\frac{\overline{\partial}}{\overline{\partial}p_j}\in\Gamma(V_{C^*})\subset\Gamma(V(T^*M_0))$, then
\begin{equation}
\label{r3}
\left[\frac{\overline{\partial}}{\overline{\partial}p_i},\frac{\overline{\partial}}{\overline{\partial}p_j}\right]=A^{ij}_k\frac{\overline{\partial}}{\overline{\partial}p_k}+B^{ij}C^*,
\end{equation} 
for some locally defined functions $A^{ij}_k$ and $B^{ij}$, since $V(T^*M_0)=V_{C^*}\oplus\{C^*\}$ is integrable. Now, if we apply the vector fields in both sides of formula \eqref{r3} to the Cartan function $K$ and using the fact that $C^*(K)=K$ and $\frac{\overline{\partial}K}{\overline{\partial}p_i}=0$, we obtain $B^{ij}K=0$. This implies that $B^{ij}=0$, and then the formula \eqref{r3} says that the vertical Liouville distribution $V_{C^*}$ is integrable.
\end{remark}
By direct caculations, we obtain the following relations for the Lie brakets of vertical vector fileds adapted to the decomposition $V(T^*M_0)=V_{C^*}\oplus\{C^*\}$,
\begin{equation}
\left[\frac{\overline{\partial}}{\overline{\partial} p_i},\frac{\overline{\partial}}{\overline{\partial} p_j}\right]=\frac{1}{K^2}\left(p^i\frac{\overline{\partial}}{\overline{\partial} p_j}-p^j\frac{\overline{\partial}}{\overline{\partial} p_i}\right),\,\,
\left[\frac{\overline{\partial}}{\overline{\partial} p_i},C^*\right]=\frac{\overline{\partial}}{\overline{\partial} p_i},
\label{III7}
\end{equation}
for all $i,j=1,\ldots,n$, and the first relation of \eqref{III7} says also that $V_{C^*}$ is integrable.

Based on the above results, similarly to \cite{B-F1}, we may say that the geometry of the leaves of $\mathcal{F}_V$ should be derived from the geometry of the leaves of $\mathcal{F}_{V_{C^*}}$ and of integral curves
of $C^*$. In order to get this interplay, we consider a leaf $F_V$ of $\mathcal{F}_V$ given locally by $x^i=a^i$, $i=1,\ldots,n$, where the $a^i$'s are constants. Then, $g^{ij}(a,p)$ are
the components of a Riemannian metric $G_{F_V}=G|_{F_V}$ on $F_V$. Denote by  $\nabla$ the
Levi-Civita connection on $F_V$ with respect to $G_{F_V}$ and consider the Christoffel symbols
$C_i^{jk}$ of $\nabla$. Then we obtain the usual formula for $C_i^{jk}$, namely
\begin{equation}
C_i^{jk}(a,p)=-\frac{1}{2}g_{is}(a,p)\frac{\partial g^{sk}}{\partial p_j}(a,p)=g_{is}(a,p)C^{sjk}(a,p),
\label{II10}
\end{equation}
where $g_{is}(a,p)$ are the entries of the inverse matrix of the $n\times n$ matrix $\left(g^{si}(a,p)\right)$.
Contracting \eqref{II10} by $p_j$ , we deduce that
\begin{equation}
C_i^{jk}(a,p)p_j=0.
\label{II11}
\end{equation}
By straightforward calculations using \eqref{II11}, \eqref{II5} and \eqref{II6}, we obtain the covariant derivatives of $C^*$, $\zeta$ and $P$ in the following lemma:
\begin{lemma}
Let $(M,K)$ be a Cartan space. Then, on any leaf $F_V$ of $\mathcal{F}_V$, we have
\begin{equation}
\nabla_X\left(\frac{1}{K}C^*\right)=\frac{1}{K}PX,
\label{II12}
\end{equation}
\begin{equation}
\left(\nabla_X\zeta\right)Y=\frac{1}{K}G(PX,PY),
\label{II13}
\end{equation}
and
\begin{equation}
\left(\nabla_XP\right)Y=-\frac{1}{K^2}\left[G(PX,PY)C^*+K\zeta(Y)PX\right]
\label{II14}
\end{equation}
for any $X,Y\in\Gamma\left(TF_V\right)$.
\end{lemma}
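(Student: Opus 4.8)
The plan is to establish \eqref{II12} first, by a direct computation along a single fibre, and then to obtain \eqref{II13} and \eqref{II14} from it in a purely formal way, using only that $\nabla$ is metric and the splitting \eqref{II4}.

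First I would fix a leaf $F_V=\{x^i=a^i\}$ and a vector field $X=X_i\frac{\partial}{\partial p_i}\in\Gamma(TF_V)$. Since on $F_V$ the momenta $p_i$ are the coordinate functions, one has $X(p_i)=X_i$, so the Leibniz rule gives
\begin{equation*}
\nabla_X C^{*}=\nabla_X\!\Big(p_i\frac{\partial}{\partial p_i}\Big)=X_i\frac{\partial}{\partial p_i}+p_i\,X_j\,C_k^{ji}\frac{\partial}{\partial p_k}=X+X_j\big(C_k^{ji}p_i\big)\frac{\partial}{\partial p_k}.
\end{equation*}
The last term vanishes because $C_k^{ji}p_i=g_{ks}C^{sji}p_i=0$ by \eqref{II11} (equivalently, by the symmetry of $C^{sjk}$ and $C^{sjk}p_k=0$ from \eqref{I7}), hence $\nabla_X C^{*}=X$; conceptually this just says that the radial (Euler) vector field $C^{*}$ of the fibre behaves as for the flat connection, which is forced by the $0$-homogeneity of $g^{ij}$ in $p$. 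Next, differentiating $K^{2}=g^{ij}p_ip_j$ and using $\frac{\partial g^{ij}}{\partial p_k}p_ip_j=0$ (again \eqref{I7}) yields $\frac{\partial K}{\partial p_k}=\frac{p^k}{K}=\zeta^k$ (cf. \eqref{II6}), so $X(K)=\frac1K X_kp^k=\frac1K G(X,C^{*})=\zeta(X)$. Combining these with the Leibniz rule and \eqref{II4},
\begin{equation*}
\nabla_X\!\Big(\frac1K C^{*}\Big)=-\frac{1}{K^{2}}X(K)\,C^{*}+\frac1K\nabla_X C^{*}=-\frac{1}{K^{2}}\zeta(X)C^{*}+\frac1K X=\frac1K PX,
\end{equation*}
which is \eqref{II12}.

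For \eqref{II13} I would use that $\nabla$, being the Levi-Civita connection of $G_{F_V}$, is metric: writing $\zeta(Y)=G\big(Y,\tfrac1K C^{*}\big)$ and applying \eqref{II12},
\begin{equation*}
X\big(\zeta(Y)\big)=G\big(\nabla_X Y,\tfrac1K C^{*}\big)+G\big(Y,\nabla_X(\tfrac1K C^{*})\big)=\zeta(\nabla_X Y)+\frac1K G(PX,Y),
\end{equation*}
so that $(\nabla_X\zeta)Y=X(\zeta(Y))-\zeta(\nabla_X Y)=\frac1K G(PX,Y)=\frac1K G(PX,PY)$, the last equality by the symmetry of $G$ and \eqref{II5}. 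Finally, for \eqref{II14} I would start from $PY=Y-\zeta(Y)\big(\tfrac1K C^{*}\big)$, apply $\nabla_X$, and substitute \eqref{II12}, \eqref{II13}, together with $P(\nabla_X Y)=\nabla_X Y-\frac1K\zeta(\nabla_X Y)C^{*}$ coming from \eqref{II4}; the two terms proportional to $\zeta(\nabla_X Y)C^{*}$ cancel and the remainder is exactly $-\frac{1}{K^{2}}\big[G(PX,PY)C^{*}+K\zeta(Y)PX\big]$.

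The only genuinely computational point is the first one, $\nabla_X C^{*}=X$, which rests on the identity $C_i^{jk}p_j=0$ and on treating $C^{*}$ as the radial field along the fibre; once this and $X(K)=\zeta(X)$ are in hand, \eqref{II13} and \eqref{II14} follow formally from metric compatibility and the projector decomposition, with no further geometric input needed.
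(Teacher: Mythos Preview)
Your proof is correct and follows the same overall architecture as the paper: establish \eqref{II12} by a coordinate computation on the fibre using $C_i^{jk}p_j=0$, and then derive \eqref{II14} from \eqref{II12} and \eqref{II13} via the decomposition $PY=Y-\zeta(Y)\tfrac{1}{K}C^{*}$. The one place where you genuinely diverge from the paper is \eqref{II13}: the paper verifies it by a direct coordinate calculation of both sides (computing $X_iY_j\,\partial\zeta^{j}/\partial p_i$ and matching it against the component form of $\tfrac{1}{K}G(PX,PY)$), whereas you obtain it conceptually from the metric compatibility of $\nabla$ together with the already-proved \eqref{II12}. Your route is cleaner and makes transparent that \eqref{II13} is nothing more than the statement that $\zeta$ is the metric dual of the unit vector $\tfrac{1}{K}C^{*}$, so its covariant derivative is governed by \eqref{II12}; the paper's route, on the other hand, is self-contained at the coordinate level and does not need to invoke $\nabla G=0$ explicitly.
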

Now, the following results can be easy obtained in a similar manner with the case of Finsler geometry, see \cite{B-F1}:
\begin{theorem}
Let $(M,K)$ be an $n$--dimensional Cartan space and $F_V$, $F_{V_{C^*}}$ and $\gamma$ be a leaf of $\mathcal{F}_V$, a leaf of $\mathcal{F}_{V_{C^*}}$ that lies in $F_V$, and an integral curve of $\frac{1}{K}C^*$, respectively. Then we have the following assertions:
\begin{enumerate}
\item[i)] $\gamma$ is a geodesic of $F_V$ with respect to $\nabla$.
\item[ii)] $F_{V_{C^*}}$ is totally umbilical immersed in $F_V$.
\item[iii)] $F_{V_{C^*}}$ lies in the indicatrix $I_a(M,K)=\{p\in T_a^*M_0\,:\,K(a,p)=1\}$ of $(M,K)$ and has constant mean curvature equal to $-1$.
\end{enumerate}
\end{theorem}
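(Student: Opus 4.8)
The plan is to extract the three assertions from the covariant--derivative identities \eqref{II12}--\eqref{II14} of the preceding lemma, once the submanifold data are set up correctly. Fix the leaf $F_V=\{x^i=a^i\}$ of $\mathcal F_V$, with its induced metric $G_{F_V}$ and Levi--Civita connection $\nabla$. Since $\mathcal L_{C^*}\subset V(T^*M_0)$, every leaf of $\mathcal F_{C^*}$ lies in a leaf of $\mathcal F_V$; if $F_{C^*}$ lies in $F_V$ then $TF_{C^*}=\mathcal L_{C^*}|_{F_{C^*}}$, an $(n-1)$-dimensional distribution inside $TF_V=V(T^*M_0)|_{F_V}$. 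By \eqref{II1}, $\xi:=\frac{1}{K}C^*$ is a $G$-unit section of the normal line bundle of $F_{C^*}$ in $F_V$, and by \eqref{II4} the morphism $P$ is exactly the $G$-orthogonal projection of $TF_V$ onto $TF_{C^*}$, so $\mathrm{Id}-P=\frac{1}{K}\zeta(\cdot)\,C^*$ projects onto the normal bundle; hence for $X,Y\in\Gamma(TF_{C^*})$ the second fundamental form is $h(X,Y)=(\mathrm{Id}-P)\nabla_XY=\nabla_XY-P(\nabla_XY)$. With this in place, for (i) we note that along $\gamma$ one has $\dot\gamma=\frac{1}{K}C^*$, so by tensoriality of $\nabla$ in its first argument and \eqref{II12},
\[
\nabla_{\dot\gamma}\dot\gamma=\frac{1}{K}\,\nabla_{C^*}\!\Bigl(\frac{1}{K}C^*\Bigr)=\frac{1}{K^2}\,PC^*=0,
\]
since $C^*\perp_G\mathcal L_{C^*}$, i.e.\ $PC^*=0$; thus $\gamma$ is an affinely parametrized geodesic of $(F_V,\nabla)$.

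For (ii), take $X,Y\in\Gamma(TF_{C^*})$, so $PX=X$, $PY=Y$ and $\zeta(X)=\zeta(Y)=0$. Then $h(X,Y)=\nabla_X(PY)-P(\nabla_XY)=(\nabla_XP)Y$, and \eqref{II14} together with $\zeta(Y)=0$ yields
\[
h(X,Y)=-\frac{1}{K^2}\,G(X,Y)\,C^*=-\frac{1}{K}\,G(X,Y)\,\xi .
\]
The right side equals $G(X,Y)\,\vec H$ with the normal field $\vec H:=-\frac{1}{K}\xi$ independent of $X,Y$, which is precisely the condition that $F_{C^*}$ be totally umbilical in $F_V$. (The same formula follows from \eqref{II13} via $\zeta(\nabla_XY)=X(\zeta(Y))-(\nabla_X\zeta)Y=-\frac{1}{K}G(X,Y)$.)

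For (iii), first, for $X=X_i\frac{\partial}{\partial p_i}\in\Gamma(\mathcal L_{C^*})$ we compute, using $\frac{\partial g^{jk}}{\partial p_i}p_k=0$ from \eqref{I7},
\[
X(K^2)=2g^{ik}X_ip_k=2\,G(X,C^*)=2K\,\zeta(X)=0,
\]
because $\zeta(X)=0$; hence $X(K)=0$, so $K$ is constant along any connected leaf $F_{C^*}$. By positive homogeneity the dilation $p\mapsto p/c$ carries leaves of $\mathcal F_{C^*}$ to leaves, so after rescaling we may take this constant equal to $1$, i.e.\ $F_{C^*}\subset I_a(M,K)=\{K(a,p)=1\}$. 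On such a leaf $K\equiv1$, so $\xi=C^*$ and, by (ii), $h(X,Y)=-G(X,Y)\xi$; choosing a $G$-orthonormal frame $e_1,\dots,e_{n-1}$ of $TF_{C^*}$, the mean curvature vector is $\vec H=\frac{1}{n-1}\sum_{\alpha=1}^{n-1}h(e_\alpha,e_\alpha)=-\xi$, so $F_{C^*}$ has constant mean curvature $-1$.

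I do not anticipate a real analytic obstacle: everything is a direct application of the lemma. The only delicate point is the organizational one of the first paragraph --- correctly identifying $\xi=\frac{1}{K}C^*$ as the unit normal and $\mathrm{Id}-P$ as the normal projection --- and the normalization in (iii): before restricting to the unit--level leaf one only gets mean curvature $-1/K$ (constant along each leaf but not equal to $-1$), so the numerical value $-1$ is exactly what singles out the indicatrix.
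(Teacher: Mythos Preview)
Your proof is correct and is precisely the argument the paper has in mind: the paper gives no proof of its own for this theorem, only the remark ``in similar manner with \cite{B-F1}'', and the intended derivation is exactly the one you carried out --- read off (i)--(iii) from the covariant--derivative identities \eqref{II12}--\eqref{II14} of the preceding lemma after identifying $\xi=\frac{1}{K}C^*$ as the unit normal and $P$ as the tangential projection.

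Your closing remark about (iii) is also on point: as written, the assertion that \emph{the} leaf lies in the \emph{unit} indicatrix and has mean curvature exactly $-1$ only holds for the level $K=1$; for a leaf on the level $K=c$ the same computation gives mean curvature $-1/c$. Your rescaling argument (dilations preserve $\mathcal{L}_{C^*}$ because $g^{ij}$ is $0$--homogeneous, hence carry leaves to leaves, and they act as homotheties of $G_{F_V}$) is the right way to reconcile this with the stated value $-1$.
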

\begin{theorem}
Let $(M,K)$ be an $n$--dimensional Cartan space and $F_V$ be a leaf of the vertical foliation $\mathcal{F}_V$. Then the sectional curvature of any nondegenerate plane section on $F_V$ containing the vertical Liouville-Hamilton vector field is equal to zero.
\end{theorem}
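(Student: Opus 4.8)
The plan is to isolate a single structural identity for the Levi--Civita connection $\nabla$ of $G_{F_V}$ on a leaf $F_V$ of $\mathcal{F}_V$, and then to read the curvature off directly. Recall that $F_V$ is the leaf $x^i=a^i$, carrying the Riemannian metric whose components in the fibre coordinates $(p_i)$ are $g^{ij}(a,p)$, and that $C^*=p_i\frac{\partial}{\partial p_i}$, restricted to $F_V$, is a nowhere--vanishing tangent vector field (since $p\neq 0$ on $T^*M_0$). The claim I would prove is that $C^*$ is a \emph{concurrent} vector field on $F_V$, namely
\[
\nabla_X C^*=X\qquad\text{for every }X\in\Gamma(TF_V).
\]

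To establish this identity, first note that $\frac{\partial g^{ij}}{\partial p_k}p_j=0$ (see \eqref{I7}) gives, for any vertical $X=X_i\frac{\partial}{\partial p_i}$, that $X(K^2)=2g^{ik}X_ip_k=2G(X,C^*)$, hence $X(K)=\frac1K G(X,C^*)=\zeta(X)$. Then, writing $C^*=K\cdot\frac1K C^*$ and using the Leibniz rule, \eqref{II12} and the decomposition \eqref{II4}, we obtain
\[
\nabla_X C^*=X(K)\,\frac1K\,C^*+K\,\nabla_X\!\Big(\tfrac1K C^*\Big)=\frac{\zeta(X)}{K}\,C^*+PX=X.
\]
The same identity can be read off in coordinates from $\nabla_{\partial/\partial p_i}C^*=\frac{\partial}{\partial p_i}+p_k\,\nabla_{\partial/\partial p_i}\frac{\partial}{\partial p_k}$, whose last summand vanishes because the Christoffel symbols of $\nabla$ are annihilated upon contraction with the momenta (cf.\ equations \eqref{II10} and \eqref{II11}).

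With $\nabla_X C^*=X$ available, the curvature computation is immediate: since $\nabla$ is torsion--free, for all $X,Y\in\Gamma(TF_V)$
\[
R(X,Y)C^*=\nabla_X\nabla_Y C^*-\nabla_Y\nabla_X C^*-\nabla_{[X,Y]}C^*=\nabla_X Y-\nabla_Y X-[X,Y]=0 ,
\]
so in particular $R(X,C^*)C^*=0$ for every $X\in\Gamma(TF_V)$. Now let $\sigma$ be a nondegenerate plane section of $TF_V$ containing $C^*$; write $\sigma=\operatorname{span}\{X,C^*\}$ with $X$ and $C^*$ linearly independent. Using $G(C^*,C^*)=K^2$ from \eqref{II1}, the sectional curvature of $\sigma$ is
\[
\operatorname{sec}(\sigma)=\frac{G\big(R(X,C^*)C^*,X\big)}{G(X,X)\,K^2-G(X,C^*)^2}=\frac{0}{G(X,X)\,K^2-G(X,C^*)^2}=0 ,
\]
the denominator being nonzero precisely because $\sigma$ is nondegenerate (since $G_{F_V}$ is positive definite this is automatic once $X$ and $C^*$ are independent). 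This is the assertion.

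I do not expect a genuine obstacle here: the entire content is the identity $\nabla_X C^*=X$, which is forced by the $1$--homogeneity of $K$; once it is in place, the vanishing of all curvature terms of the form $R(\,\cdot\,,\cdot\,)C^*$ — and hence of the sectional curvature of every plane through $C^*$ — is purely formal. As a consistency check, this identity also recovers the earlier fact that integral curves of $\frac1K C^*$ are geodesics of $F_V$: from $\nabla_{C^*}C^*=C^*$ and $C^*(K)=\zeta(C^*)=K$ one computes $\nabla_{\frac1K C^*}\big(\frac1K C^*\big)=0$.
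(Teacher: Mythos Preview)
Your proof is correct and is exactly the argument the paper has in mind: the paper does not spell out a proof but states the theorem immediately after Lemma~2.1, referring to \cite{B-F1} for the analogous Finsler case, and your derivation of the concurrent property $\nabla_X C^*=X$ from \eqref{II12} together with \eqref{II4} is precisely how that lemma is meant to be used here. The subsequent vanishing of $R(\,\cdot\,,\,\cdot\,)C^*$ via torsion-freeness is the standard consequence, so nothing is missing.
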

\begin{corollary}
Let $(M,K)$ be an $n$--dimensional Cartan space. Then there exist no leaves of $\mathcal{F}_V$ which are positively or negatively curved.
\end{corollary}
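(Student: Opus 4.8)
The plan is to obtain the corollary as an immediate consequence of the preceding theorem, the key point being that on a leaf $F_V$ \emph{every} two--plane is automatically nondegenerate, so the vanishing of sectional curvature on the distinguished planes through $C^*$ already rules out definiteness of the curvature.

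First I would record the two facts that make the reduction work. By construction the induced metric $G_{F_V}$ on a leaf $F_V$ of $\mathcal{F}_V$, locally given by $x^i=a^i$, has components $g^{ij}(a,p)$, which form a positive definite matrix by property iii) in the definition of a Cartan structure; hence $G_{F_V}$ is a genuine Riemannian metric and \emph{every} two--dimensional subspace of $T_pF_V$ is a nondegenerate plane section. Moreover, the restriction of the vertical Liouville--Hamilton vector field $C^*$ to $F_V$ is tangent to $F_V$ (being vertical) and is nowhere zero on $T^*M_0$, since $G(C^*,C^*)=K^2>0$ whenever $p\neq 0$ by \eqref{II1}.

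Next, I would argue by contradiction. Assume some leaf $F_V$ is positively curved (the negatively curved case is identical), i.e. the sectional curvature of every plane section of $F_V$ is positive; here $n=\dim F_V\geq 2$, otherwise the notion is vacuous. At any point $p$ choose a tangent vector $X\in T_pF_V$ linearly independent of $C^*_p$; then $\sigma_0=\mathrm{span}\{C^*_p,X\}$ is a two--plane, hence nondegenerate by the remark above, and it contains the vertical Liouville--Hamilton vector field. Applying the previous theorem to $\sigma_0$ gives that its sectional curvature is $0$, contradicting positivity; symmetrically no leaf can be negatively curved.

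I do not expect a genuine obstacle here: the only subtleties are the observation that positive definiteness of $(g^{ij})$ makes all plane sections of $F_V$ nondegenerate (so the hypothesis of the theorem is freely available on any plane through $C^*$), and the fact that $C^*$ is nonvanishing, which guarantees that such planes actually exist once $n\geq 2$. With these in place the corollary is a one--line consequence of the previous theorem.
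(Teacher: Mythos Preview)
Your argument is correct and is exactly the intended deduction: the paper states the corollary without proof, as an immediate consequence of the preceding theorem (following \cite{B-F1}), and your contradiction via a plane through $C^*$ is the natural and standard way to extract it. The observations you flag---positive definiteness of $G_{F_V}$ making every plane section nondegenerate, and $C^*\neq 0$ on $T^*M_0$---are precisely the small points needed to make the one--line reduction rigorous.
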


\section{Vr\u{a}nceanu connections on the cotangent bundle of a Cartan space}
\setcounter{equation}{0}

 In this section, following some ideas from \cite{B-F}, we investigate the Vr\u{a}nceanu connections on a Cartan space, related with the vertical and vertical Liouville-Hamilton foliations. 
 
 First of all we remark that on a Cartan space $(M,K)$ there exists the canonical metrical $N$-linear connection, \cite{M-H-S}: $C\Gamma(N)=(H_{jk}^i,C^{jk}_i)$. Its local coefficients, with respect to adapted basis $\{\frac{\delta}{\delta x^i},\frac{\partial}{\partial p_i}\}$, are
 \begin{equation}
 H_{jk}^i=\frac{1}{2}g^{is}\left(\frac{\delta g_{sk}}{\delta x^j}+\frac{\delta g_{js}}{\delta x^k} - \frac{\delta g_{jk}}{\delta x^s}\right),\quad C^{jk}_i=g_{is}C^{sjk}. \label{IV1}
\end{equation}

Also, the Levi-Civita connection $\nabla$ on the Riemannian manifold $(T^*M_0,G)$ is explicitly given with respect to the adapted basis $\{\frac{\delta}{\delta x^i},\frac{\partial}{\partial p_i}\}$, see \cite{A-R1}, Theorem 1, namely
\begin{eqnarray*}
\nabla_{\frac{\delta}{\delta x^i}}\frac{\delta}{\delta x^j}&=&H^k_{ij}\frac{\delta}{\delta x^k}+\frac{1}{2}(R_{ijk}+C_{ijk})\frac{\partial}{\partial p_k},\\
\nabla_{\frac{\delta}{\delta x^{i}}}\frac{\partial}{\partial p_j}&=&\nabla_{\frac{\partial}{\partial p_j}}\frac{\delta}{\delta x^{i}}-N^j_{ih}\frac{\partial}{\partial p_h}\\
&=&-\frac{1}{2}(R_{ish}g^{hj}g^{sk}-2C^{jk}_i)\frac{\delta}{\delta x^k}\\
&&+\frac{1}{2}\left(\frac{\delta g^{jk}}{\delta x^{i}}+\frac{\partial (N_{is}g^{sj})}{\partial p_k}-\frac{\partial (N_{is}g^{sk})}{\partial p_j}\right)g_{kh}\frac{\partial}{\partial p_h},\\
\nabla_{\frac{\partial}{\partial p_i}}\frac{\partial}{\partial p_j}&=&-\frac{1}{2}\left(\frac{\delta g^{ij}}{\delta x^k}+N^{i}_{ks}g^{sj}+N^{j}_{ks}g^{si}\right)g^{kh}\frac{\delta}{\delta x^h}-C^{ij}_k\frac{\partial}{\partial p_k},
\end{eqnarray*}
where $R_{ijk}=\frac{\delta N_{jk}}{\delta x^{i}}-\frac{\delta N_{ik}}{\delta x^j}$, $N^j_{ik}=\frac{\partial N_{ik}}{\partial p_j}$ and $C_{ijk}=-2g_{il}g_{jm}C^{lm}_{k}$.

In our future context we are interested about the Vr\u{a}nceanu connection on the foliated Riemannian manifold $(T^*M_0,G)$ with respect to both vertical foliation $\mathcal{F}_V$ and vertical Liouville-Hamilton foliation $\mathcal{F}_{V_{C^*}}$.

Let $(M,g,\mathcal{F})$ be a Riemannian foliated manifold, with $D$, $D^\perp$, the structural and transversal distributions, respectively. According to \cite{B-F2}, the Vr\u{a}nceanu connection $\nabla^*$ on $(M,f,\mathcal{F})$ is defined by
\begin{equation}
\label{IV2}
\nabla^*_XY=\mathcal{D}\nabla_{\mathcal{D}X}\mathcal{D}Y+\mathcal{D}^\perp \nabla_{\mathcal{D}^\perp X}\mathcal{D}^\perp Y+\mathcal{D}\left[\mathcal{D}^\perp X,\mathcal{D}Y\right]+\mathcal{D}^\perp \left[\mathcal{D}X,\mathcal{D}^\perp Y\right],
\end{equation}
for any $X, Y\in \Gamma(TM)$, where $\nabla$ is the Levi-Civita connection on $(M,g)$, and $\mathcal{D}$ and $\mathcal{D}^\perp$ are the projection morphisms of $\Gamma(TM)$ on $\Gamma(D)$ and $\Gamma(D^\perp)$, respectively.

The linear connection $\nabla^*$ was defined first, using local coordinates, by Vr\u{a}nceanu \cite{VR} on a non-holonomic manifold endowed with a linear connection, where by a non-holonomic manifold we mean a manifold that is endowed with two complementary distributions, at least one of which is non-integrable.

\begin{remark}
We notice that many of the connections that are used in the literature to study foliated manifolds can be related to the Vr\u{a}nceanu connection in one way or the other. For example, Bott connection \cite{T} is the restriction of the Vr\u{a}nceanu connection to transversal distribution. Also, the adapted connection of Reinhart \cite{R} or Vaisman connection (also called second connection) \cite{Va} are the Vr\u{a}nceanu connection on that foliated manifold.
\end{remark}

Using the Levi-Civita connection $\nabla$  on the Riemannian manifold $(T^*M_0,G)$ given above with respect to the adapted basis  $\{\frac{\delta}{\delta x^i},\frac{\partial}{\partial p_i}\}$, we obtain the following local expression for the Vr\u{a}nceanu connection $\nabla'^ *$ on the cotangent bundle of a Cartan space $(M,K)$, endowed with the metric (\ref{I10}) and with vertical foliation, $(T^*M_0,G, \mathcal{F}_V)$:

\[\nabla'^*_{\frac{\partial}{\partial p_j}}\frac{\partial}{\partial p_i}=C'^{ij}_k \frac{\partial}{\partial p_k},\quad \nabla'^*_{\frac{\delta}{\delta x^j}}\frac{\partial}{\partial p_i}=D'^i_{jk} \frac{\partial}{\partial p_k},\]
\[\nabla'^*_{\frac{\partial}{\partial p_j}}\frac{\delta}{\delta x^i}=L'^{kj}_i \frac{\delta}{\delta x^k},\quad \nabla'^*_{\frac{\delta}{\delta x^j}}\frac{\delta}{\delta x^i}=F'^k_{ij} \frac{\delta}{\delta x^k},\]
where the local coefficients of the Vr\u{a}nceanu connection $\nabla'^*$ are given by:
\[C'^{ij}_k =\frac{1}{2}g_{kh}\left(\frac{\partial g^{hi}}{\partial p_j}+\frac{\partial g^{hj}}{\partial p_i}-\frac{\partial g^{ij}}{\partial p_h}\right)=-g_{kh}C^{hij}=-C^{ij}_k,\]
\begin{equation}
\label{IV3}
D'^i_{jk}=-\frac{\partial N_{jk}}{\partial p_i},\quad L'^{kj}_i=0,\quad F'^k_{ij}=H^k_{ij}.
\end{equation}
The only nonzero local components of the torsion tensor field $T'^*$ of the Vr\u{a}nceanu connection are
\[T'^*\left(\frac{\delta}{\delta x^i},\frac{\delta}{\delta x^j}\right)=T'^*_{ijk}\frac{\partial}{\partial p_k}=\left(\frac{\delta N_{ik}}{\delta x^j}-\frac{\delta N_{jk}}{\delta x^i}\right)\frac{\partial}{\partial p_k}.\]
Hence, it results the following:
\begin{proposition}
The Vr\u{a}nceanu connection $\nabla^{\prime*}$ on $(T^*M_0,G, \mathcal{F}_V)$ is torsion-free if and only if the canonical nonlinear connection $N$ is integrable.
\end{proposition}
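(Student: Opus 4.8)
The plan is to read off the claim directly from the formula for the torsion tensor $\check{T}$ given just above the statement. We are told that the only (potentially) nonzero components of $\check{T}$ are $\check{T}(\delta_i,\delta_j)=\check{T}_{ijk}\,\partial^k$ with $\check{T}_{ijk}=\delta_j N_{ik}-\delta_i N_{jk}$ (correcting the evident typo in the displayed line). So $\check{\nabla}$ is torsion-free precisely when $\check{T}_{ijk}=0$ for all $i,j,k$, i.e.\ when $\delta_i N_{jk}=\delta_j N_{ik}$ for all $i,j,k$. The remaining task is to recognize this system of equations as the integrability condition for the canonical nonlinear connection $N$.

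First I would recall the standard characterization of integrability of a nonlinear connection on $T^*M$. The horizontal distribution $H(T^*M_0)$ is spanned by $\delta_i=\partial_i^x+N_{ij}\partial^j$, and by the Frobenius theorem it is integrable iff $[\delta_i,\delta_j]$ is again horizontal for all $i,j$. A direct bracket computation gives
\[
[\delta_i,\delta_j]=\bigl(\delta_i N_{jk}-\delta_j N_{ik}\bigr)\,\partial^k,
\]
which is a purely vertical vector field; hence it lies in $H(T^*M_0)$ iff it vanishes, i.e.\ iff $\delta_i N_{jk}=\delta_j N_{ik}$ for all indices. This is exactly the curvature tensor of the nonlinear connection, and its vanishing is by definition the integrability of $N$. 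Thus the vanishing of $\check{T}$ and the integrability of $N$ are expressed by the identical system of equations, and the proposition follows.

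I would therefore structure the write-up in two short steps: (1) from the displayed torsion formula, $\check{T}\equiv 0\iff \delta_i N_{jk}-\delta_j N_{ik}=0$; (2) compute $[\delta_i,\delta_j]$ (or quote it from the preliminaries / standard references such as \cite{M-H-S}) to identify $\delta_i N_{jk}-\delta_j N_{ik}$ with the components of the curvature of $N$, whose vanishing is the definition of integrability of the canonical nonlinear connection. The main (and only real) obstacle is a bookkeeping one: making sure the torsion line is stated with the correct indices, $\check{T}_{ijk}=\delta_i N_{jk}-\delta_j N_{ik}$, since the version printed above has a transcription slip ($\delta_jN_{ik}-\delta_jN_{jk}$); once that is fixed, the equivalence is immediate and no further computation is needed.
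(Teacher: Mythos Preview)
Your proposal is correct and matches the paper's own reasoning: the paper does not give a separate proof, but simply states the proposition as an immediate consequence of the displayed torsion formula $\check{T}(\delta_i,\delta_j)=\check{T}_{ijk}\partial^k$, and your identification of $\check{T}_{ijk}=\delta_iN_{jk}-\delta_jN_{ik}$ with the curvature of $N$ via the bracket $[\delta_i,\delta_j]$ is exactly the intended step (and you are right that the printed line contains a transcription slip in the indices).
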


Related to Vr\u{a}nceanu connection (Vaisman connection) there is the notion of \textit{Reinhart space}. A Riemannian foliated manifold $(M,g, F)$ is called a Reinhart space iff 
\[(\nabla^*_Xg)(Y,Z)=0,\quad \forall X\in \Gamma(T\mathcal{F}),Y,Z\in \Gamma(T^{\bot}\mathcal{F}).\]

\begin{proposition}
Let $(M,K)$ be a Cartan space. The foliated manifold $(T^*M_0,G,\mathcal{F}_V$) is a Reinhart space if and only if the metric $g=(g^{ij})$ is Riemannian on $M$.
\end{proposition}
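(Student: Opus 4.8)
The plan is to evaluate the Reinhart condition on the adapted frame. For the vertical foliation $\mathcal{F}_V$ one has $T\mathcal{F}_V=V(T^*M_0)$, spanned by $\{\partial^i=\tfrac{\partial}{\partial p_i}\}$, while, by the form \eqref{I10} of $G$, the $G$-orthogonal complement $T^{\perp}\mathcal{F}_V$ is the horizontal bundle $H(T^*M_0)$, spanned by $\{\delta_i=\tfrac{\delta}{\delta x^i}\}$; moreover $G(\delta_i,\delta_j)=g_{ij}$ and $G(\delta_i,\partial^j)=0$. Since $\check\nabla G$ is $C^\infty(T^*M_0)$-linear in each of its three slots, the Reinhart condition $(\check\nabla_X G)(Y,Z)=0$ for $X\in\Gamma(T\mathcal{F}_V)$, $Y,Z\in\Gamma(T^{\perp}\mathcal{F}_V)$ is equivalent to the finitely many scalar equations $(\check\nabla_{\partial^j}G)(\delta_i,\delta_k)=0$ for all $i,j,k=1,\dots,n$.

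Next I would compute this covariant derivative using the local coefficients \eqref{IV2} of the Vr\u{a}nceanu connection. Expanding,
\[
(\check\nabla_{\partial^j}G)(\delta_i,\delta_k)=\partial^j\bigl(G(\delta_i,\delta_k)\bigr)-G\bigl(\check\nabla_{\partial^j}\delta_i,\delta_k\bigr)-G\bigl(\delta_i,\check\nabla_{\partial^j}\delta_k\bigr).
\]
From \eqref{IV2} one has $\check L^{kj}_i=0$, hence $\check\nabla_{\partial^j}\delta_i=\check L^{kj}_i\delta_k=0$, so the last two terms vanish and there remains
\[
(\check\nabla_{\partial^j}G)(\delta_i,\delta_k)=\frac{\partial g_{ik}}{\partial p_j}.
\]
Therefore $(T^*M_0,G,\mathcal{F}_V)$ is a Reinhart space if and only if $\dfrac{\partial g_{ik}}{\partial p_j}=0$ for all $i,j,k$, i.e. if and only if the components $g_{ik}$ of the fundamental tensor are functions of the base coordinates $x$ alone.

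It then remains to recognize this as the condition that $g=(g^{ij})$ be Riemannian on $M$. Differentiating the identity $g^{is}g_{sk}=\delta^i_k$ with respect to $p_j$ yields $\dfrac{\partial g^{it}}{\partial p_j}=-\,g^{is}g^{kt}\dfrac{\partial g_{sk}}{\partial p_j}$, so $(g_{ij})$ is independent of the momenta exactly when $(g^{ij})$ is; by \eqref{I6} the latter is the vanishing of the Cartan tensor $C^{ijk}=-\tfrac14\partial^3K^2/\partial p_i\partial p_j\partial p_k$, equivalently $K^2=g^{ij}(x)p_ip_j$ is a positive-definite quadratic form in the momenta at each point, that is, $g=(g^{ij})$ is the cometric of a Riemannian metric on $M$. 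This gives the stated equivalence. The computation itself is short; the only points that need a word of care are the tensoriality remark that legitimizes passing to the adapted frame $\{\delta_i,\partial^i\}$ and the elementary passage between the $p$-constancy of $(g_{ij})$ and of $(g^{ij})$, so I do not expect a genuine obstacle here.
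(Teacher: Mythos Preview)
Your proof is correct and follows essentially the same approach as the paper: both use $\check L^{kj}_i=0$ from \eqref{IV2} to reduce $(\check\nabla_{\partial^j}G)(\delta_i,\delta_k)$ to $\partial^j g_{ik}$, and conclude that the Reinhart condition is equivalent to $g_{ij}$ (hence $g^{ij}$) being independent of the momenta. You are simply more explicit than the paper about the tensoriality reduction to the adapted frame and about the passage between $p$-constancy of $(g_{ij})$ and of $(g^{ij})$.
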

\begin{proof}
Since $\nabla'^*_{\frac{\partial}{\partial p_j}}\frac{\delta}{\delta  x^i}=0$, we have
\[\left(\nabla'^*_{\frac{\partial}{\partial p_k}}G\right)\left(\frac{\delta}{\delta  x^i},\frac{\delta}{\delta  x^j}\right)=\frac{\partial g_{ij}}{\partial p_k},\]
which vanishes iff $g_{ij}$ does not depends by the momenta $p_k$, for all indices $i,j,k$. It follows that the condition $\left(\nabla'^*_XG\right)(Y,Z)=0,\quad \forall X\in \Gamma(V(T^*M_0)),Y,Z\in \Gamma(H(T^*M_0))$ is equivalent to $g^{ij}=g^{ij}(x)$, so it is a Riemannian metric on $M$.
\end{proof}

Now, we consider the Riemannian manifold $(T^*M_0,G)$ endowed with the vertical Liouville-Hamilton foliation $\mathcal{F}_{V_{C^*}}$. As we already seen the structural bundle of this foliation is given by $V_{C^*}$ and the transversal bundle is given by $\{C^*\}\oplus H(T^*M_0)$. We denote by $\it{v}_{C^*}$ the projector of $T(T^*M_0)$ onto $V_{C^*}$ and by $q$ the projector of  $T(T^*M_0)$ onto $\{C^*\}\oplus H(T^*M_0)$. If we denote by $\nabla^{\prime\prime*}$ the Vr\u{a}nceanu connection on the foliated Riemannian manifold $(T^*M_0,G,\mathcal{F}_{V_{C^*}})$, by using the relation (\ref{IV2}) with $\mathcal{D}=\it{v}_{C^*}$ and $\mathcal{D}^\perp=q$, and the second formula of \eqref{III7}, we obtain
\begin{equation}
\label{IV4}
\nabla^{\prime\prime*}_{C^*}\frac{\overline{\partial}}{\overline{\partial}p_i}=-\frac{\overline{\partial}}{\overline{\partial}p_i},\quad \nabla^{\prime\prime*}_{\frac{\overline{\partial}}{\overline{\partial}p_i}}C^*=0,
\end{equation}
where $V_{C^*}$ is spanned by $\{\frac{\overline{\partial}}{\overline{\partial}p_i}=P_j^i\frac{\partial}{\partial p_j}\}$ and $\{C^*\}\oplus H(T^*M_0)$ is spanned by $\{C^*,\frac{\delta}{\delta x^i}\}$, $i=1,\ldots,n$. 

The whole expression of this Vr\u{a}nceanu connection in the adapted basis $\left\{\frac{\overline{\partial}}{\overline{\partial}p_i},C^*,\frac{\delta}{\delta x^{i}}\right\}$ with respect to the decomposition $T(T^*M_0)=V_{C^*}\oplus\{C^*\}\oplus H(T^*M_0)$ is given by:
\begin{eqnarray*}
\nabla^{\prime\prime*}_{C^*}C^*&=&C^*,\\
\nabla^{\prime\prime*}_{C^*}{\frac{\delta}{\delta x^{i}}}&=&\frac{p_jp^h}{K^2}\left(\frac{1}{2}\left(\frac{\delta g^{jk}}{\delta x^{i}}+\frac{\partial(N_{is}g^{sj})}{\partial p_k}-\frac{\partial(N_{is}g^{sk})}{\partial p_j}\right)g_{kh}+N^j_{ih}\right)C^*\\
&&-\frac{p_j}{2}(R_{ish}g^{hj}g^{sk}-2C^{jk}_i)\frac{\delta}{\delta x^k},\\
\nabla^{\prime\prime*}_{\frac{\overline{\partial}}{\overline{\partial}p_i}}\frac{\delta}{\delta x^j}&=&\left(P^{i}_lN^{l}_{jk}-\frac{\delta P^{i}_k}{\delta x^j}\right)\frac{p^k}{K^2}C^*,\\
\nabla^{\prime\prime*}_{\frac{\overline{\partial}}{\overline{\partial}p_i}}\frac{\overline{\partial}}{\overline{\partial}p_j}&=&\left(P^{i}_k\frac{\partial P^j_l}{\partial p_k}-P^{i}_kP^j_mC^{km}_l\right)\frac{\overline{\partial}}{\overline{\partial}p_l}\\
&=&-\frac{1}{K^2}\left(P^{i}_lp^j+h^{ij}p_l+K^2P^{i}_kP^j_mC^{km}_l\right)\frac{\overline{\partial}}{\overline{\partial}p_l},\\
\nabla^{\prime\prime*}_{\frac{\delta}{\delta x^{i}}}\frac{\overline{\partial}}{\overline{\partial}p_j}&=&\left(\frac{\delta P^j_k}{\delta x^{i}}-P^j_l N^l_{ik}\right)\frac{\overline{\partial}}{\overline{\partial}p_k},\\
\nabla^{\prime\prime*}_{\frac{\delta}{\delta x^{i}}}\frac{\delta}{\delta x^{j}}&=&H^k_{ij}\frac{\delta}{\delta x^{k}}+\frac{p^k}{2K^2}\left(R_{ijk}+C_{ijk}\right)C^*,\\
\nabla^{\prime\prime*}_{\frac{\delta}{\delta x^{i}}}C^*&=&\frac{1}{K^2}\left(N_{ik}p^k+\frac{p_j}{2}\left(\frac{\delta g^{jk}}{\delta x^{i}}+\frac{\partial(N_{is}g^{sj})}{\partial p_k}-\frac{\partial(N_{is}g^{sk})}{\partial p_j}\right)g_{kh}p^h\right)C^*\\
&&-\frac{p_j}{2}(R_{ish}g^{hj}g^{sk}-2C^{jk}_i)\frac{\delta}{\delta x^k},
\end{eqnarray*}
but in our future study we have need only the relations \eqref{IV4}.

In the end of this section, we notice that for every fixed point $x_0\in M$, the leaf $F_V=T^*_{x_0}M$ of the vertical foliation $\mathcal{F}_V$ is also a Riemannian  manifold, foliated by $V_{C^*}$ with transversal distribution $\{C^*\}$. If we consider the restriction of the Vr\u{a}nceanu connection $\nabla^{\prime\prime*}$ and of the metric $G$ to vertical vector fields, using \eqref{IV4}, we obtain 

	\[\left(\nabla^{\prime\prime*} _{\frac{\overline{\partial}}{\overline{\partial} p_i}}G_{F_V}\right)(C^*,C^*)=\frac{\overline{\partial}K^2}{\overline{\partial} p_i}-2G_{F_V}(\nabla^{\prime\prime*} _{\frac{\overline{\partial}}{\overline{\partial} p_i}}C^*,C^*)=2K\frac{\overline{\partial}K}{\overline{\partial} p_i}=0,
\]
for every $i=1,\ldots,n$ which leads to

\begin{proposition}
For every fixed point $x_0\in M$, $(T^*_{x_0}M,G_{F_V},\mathcal{F}_{V_{C^*}})$ is a Reinhart space. 
\end{proposition}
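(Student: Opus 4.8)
The plan is to verify directly the defining condition of a Reinhart space recalled in the Remark above: denoting by $\nabla^v$ the (induced) Vaisman connection of the foliated Riemannian leaf $(T^*_{x_0}M,h_{x_0},\mathcal{L}_{C^*})$, one must show that $(\nabla^v_Xh_{x_0})(Y,Z)=0$ for every $X$ tangent to the leaves of $\mathcal{L}_{C^*}$ (the structural distribution) and every $Y,Z$ in the transversal distribution. On the leaf $T^*_{x_0}M$ of $\mathcal{F}_V$ the tangent bundle is the vertical bundle $V(T^*_{x_0}M)$, which by \eqref{III1} splits $G$-orthogonally as $\mathcal{L}_{C^*}\oplus\{C^*\}$; hence here the structural distribution is $\mathcal{L}_{C^*}$, locally spanned by $\overline{\partial}^1,\ldots,\overline{\partial}^{n-1}$, and the transversal distribution is the line bundle $\{C^*\}$, spanned by $C^*$. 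Since $(\nabla^v_Xh_{x_0})(Y,Z)$ is $C^\infty$-linear in $X,Y,Z$, it suffices to check the single family of scalar identities
\[
\bigl(\nabla^v_{\overline{\partial}^a}h_{x_0}\bigr)(C^*,C^*)=0,\qquad a=1,\ldots,n-1.
\]

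The computation I would carry out is: expand
\[
\bigl(\nabla^v_{\overline{\partial}^a}h_{x_0}\bigr)(C^*,C^*)=\overline{\partial}^a\bigl(h_{x_0}(C^*,C^*)\bigr)-2\,h_{x_0}\bigl(\nabla^v_{\overline{\partial}^a}C^*,C^*\bigr).
\]
The second term vanishes because $\nabla^v_{\overline{\partial}^a}C^*=0$ by \eqref{IV9}. For the first term, $h_{x_0}(C^*,C^*)=G(C^*,C^*)=K^2$ by \eqref{II1}, so the claim reduces to $\overline{\partial}^a(K^2)=0$. Writing $\overline{\partial}^a=\frac{\partial}{\partial p_a}-t^aC^*$ and using $\frac{\partial K^2}{\partial p_a}=2p^a$ (from \eqref{I6}), the $1$-homogeneity relation $C^*(K^2)=2K^2$ (already used in Section 4) and $t^a=\frac{p^a}{K^2}$ from \eqref{III4}, one gets $\overline{\partial}^a(K^2)=2p^a-2t^aK^2=0$. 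This establishes the Reinhart condition for $\nabla^v$ along the normal direction; and since, as observed just before the statement, the global operator $\nabla^v$ on $V(T^*M_0)$ restricts on $T^*_{x_0}M$ exactly to the Vaisman connection of $(T^*_{x_0}M,h_{x_0},\mathcal{L}_{C^*})$, the conclusion follows.

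There is no genuine obstacle here; the only points requiring a little care are (i) recognizing that the transversal distribution of $\mathcal{L}_{C^*}$ inside a leaf of $\mathcal{F}_V$ is precisely the one-dimensional bundle $\{C^*\}$, which collapses the Reinhart condition to a single scalar equation, and (ii) invoking the already-recorded fact that $\nabla^v$ restricts to the leafwise Vaisman connection (so that the foliated manifold in the statement is indeed the one for which $\nabla^v$ was built). The computational heart is then the one-line identity $\overline{\partial}^a(K^2)=0$, which is nothing but the homogeneity of $K$ expressed in the adapted frame, and this is exactly what makes $\nabla^v_{\overline{\partial}^a}C^*=0$ compatible with metricity of the transversal line bundle.
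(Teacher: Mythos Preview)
Your proposal is correct and follows essentially the same approach as the paper: both reduce the Reinhart condition to the single scalar identity $(\nabla^v_{\overline{\partial}^a}G)(C^*,C^*)=0$, use $\nabla^v_{\overline{\partial}^a}C^*=0$ from \eqref{IV9} to kill the connection term, and then verify $\overline{\partial}^a(K^2)=\frac{\partial K^2}{\partial p_a}-t^aC^*(K^2)=0$ via $t^a=p^a/K^2$ and $C^*(K^2)=2K^2$. Your write-up is in fact slightly more careful than the paper's in making explicit why the one-dimensional transversal bundle collapses the check to this single equation.
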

\section{Subfoliations in the cotangent manifold of a Cartan space}
\setcounter{equation}{0}

In this section, following \cite{C-M}, we briefly recall the notion of a $(q_1,q_2)$--codimensional subfoliation on a manifold and we identify a $(n,2n-1)$--codimensional subfoliation $(\mathcal{F}_V,\mathcal{F}_{C^*})$ on the cotangent manifold $T^*M_0$ of a Cartan space $(M,K)$, where $\mathcal{F}_V$ is the vertical foliation and $\mathcal{F}_{C^*}$ is the line foliation spanned by the vertical Liouville-Hamilton vector field $C^*$. Firstly, we make a general approach about basic connections on the normal bundles related to this subfoliation and next a triple of adapted basic connections with respect to this subfoliation is given.

\begin{definition}
Let $M$ be a $n$--dimensional manifold and $TM$ its tangent bundle. A \textit{$(q_1,q_2)$--codimensional subfoliation} on $M$ is a couple $(F_1,F_2)$ of integrable subbundles $F_k$ of $TM$ of dimension $n-q_k$, $k=1,2$ and $F_2$ being at the same time a subbundle of $F_1$.
\end{definition}
For a subfoliation $(F_1,F_2)$, its normal bundle is defined as $Q(F_1,F_2)=QF_{21}\oplus QF_1$, where $QF_{21}$ is the quotient bundle $F_1/F_2$ and $QF_1$ is the usual normal bundle of $F_1$. So, an exact sequence of vector bundles
\begin{equation}
0\longrightarrow QF_{21}\stackrel{i}{\longrightarrow} QF_2\stackrel{\pi}{\longrightarrow}QF_1\longrightarrow0
\label{V1}
\end{equation}
appears in a canonical way.

Also if we consider the canonical exact sequence associated to the foliation given by an integrable subbundle $F$, namely
\begin{displaymath}
0\longrightarrow F\stackrel{i_F}{\longrightarrow}TM\stackrel{\pi_F}{\longrightarrow}QF\longrightarrow0
\end{displaymath}
then we recall that a connection $D:\Gamma(TM)\times\Gamma(QF)\rightarrow\Gamma(QF)$ on the normal bundle $QF$ is said to be \textit{basic} if 
\begin{equation}
D_XY=\pi_F[X,\widetilde{Y}]
\label{V2}
\end{equation} 
for any $X\in\Gamma(F)$, $\widetilde{Y}\in\Gamma(TM)$ such that $\pi_F(\widetilde{Y})=Y$.

Similarly, for a $(q_1,q_2)$--subfoliation $(F_1,F_2)$ we can consider the following exact sequence of vector bundles
\begin{equation}
0\longrightarrow F_2\stackrel{i_0}{\longrightarrow}F_1\stackrel{\pi_0}{\longrightarrow}QF_{21}\longrightarrow0
\label{V3}
\end{equation}
and according to \cite{C-M} a connection $\nabla$ on $QF_{21}$ is said to be basic with respect to the subfoliation $(F_1,F_2)$ if
\begin{equation}
D_XY=\pi_0[X,\widetilde{Y}]
\label{V4}
\end{equation}
for any $X\in\Gamma(F_2)$ and $\widetilde{Y}\in\Gamma(F_1)$ such that $\pi_0(\widetilde{Y})=Y$.

\subsection{A $(n,2n-1)$--codimensional subfoliation $(\mathcal{F}_V,\mathcal{F}_{C^*})$ of $(T^*M_0,G)$}
Taking into account the discussion from the previous section, for an $n$--dimensional Cartan space $(M,K)$, we have on the $2n$--dimensional cotangent manifold $T^*M_0$ a $(n,2n-1)$--codimensional subfoliation $(\mathcal{F}_V,\mathcal{F}_{C^*})$. We also notice that the metric structure $G$ on $T^*M_0$ given by \eqref{I10} is compatible with the subfoliated structure, that is 
\begin{displaymath}
Q\mathcal{F}_V\cong H(T^*M_0),\,Q\mathcal{F}_{C^*}\cong \{C^*\}^{\perp},\,V(T^*M_0)/\{C^*\}\cong V_{C^*}.
\end{displaymath} 
Let us consider the following exact sequences associated to the subfoliation $(\mathcal{F}_V,\mathcal{F}_{C^*})$ 
\begin{displaymath}
0 \longrightarrow \{C^*\} \stackrel{i_0}{\longrightarrow} V(T^*M_0) \stackrel{\pi_0}{\longrightarrow}{V_{C^*}} \longrightarrow 0,
\end{displaymath}
and to foliations $\mathcal{F}_V$ and $\mathcal{F}_{C^*}$, respectively
\begin{displaymath}
0 \longrightarrow V(T^*M_0) \stackrel{i_1}{\longrightarrow} T(T^*M_0) \stackrel{\pi_1}{\longrightarrow}H(T^*M_0) \longrightarrow 0,
\end{displaymath}
\begin{displaymath}
0 \longrightarrow \{C^*\} \stackrel{i_2}{\longrightarrow} T(T^*M_0) \stackrel{\pi_2}{\longrightarrow}\{C^*\}^{\perp} \longrightarrow 0,
\end{displaymath}
where $i_0,i_1,i_2$, $\pi_0,\pi_1,\pi_2$ are the canonical inclusions and projections, respectively. 

A triple $(D^1, D^2,D)$ of basic connections on  normal bundles $V_{C^*}$ of subfoliation $(\mathcal{F}_{V},\mathcal{F}_{C^*})$, $H(T^*M_0)$ of vertical foliation $\mathcal{F}_V$, and $\{C^*\}^{\perp}$ of line foliation $\mathcal{F}_{C^*}$, respectively, is called (according to the terminology used in \cite{C-M}) \textit{adapted} to the subfoliation $(\mathcal{F}_{V},\mathcal{F}_{C^*})$.

Our goal is to determine such a triple of connections, adapted to this subfoliation.

By \eqref{V4} a connection $D^1$ on $V_{C^*}$ is basic with respect to the subfoliation $(\mathcal{F}_V,\mathcal{F}_{C^*})$ if 
\begin{equation}
D^1_XZ=\pi_0[X,\widetilde{Z}],\, \forall\, X\in \Gamma(\{C^*\}),\, \forall\, \widetilde{Z}\in \Gamma(V(T^*M_0)),\,\pi_0(\widetilde{Z})=Z.
\label{V5}
\end{equation}
\begin{proposition}
\label{p5.1}
A connection $D^1$ on $V_{C^*}$ is basic with respect to the subfoliation $(\mathcal{F}_{V},\mathcal{F}_{C^*})$ if and only if 
\begin{displaymath}
D^1_{C^*}Z=[C^*,Z],\, \forall\, Z\in \Gamma(V_{C^*}).
\end{displaymath}
\end{proposition}
\begin{proof}
Let $D^1$ be a connection on $V_{C^*}$ such that $D^1_{C^*}Z=[C^*,Z]$. Let $X\in \Gamma(\{C^*\})$ be a section in the structural bundle of the line foliation $\mathcal{F}_{C^*}$, so its form is $X=aC^*$, with $a$ a differentiable function on $T^*M_0$. An arbitrary vertical vector field $\widetilde{Z}$ which projects into $Z\in V_{C^*}$ is in the form
\begin{displaymath}
\widetilde{Z}=Z+bC^*
\end{displaymath}
with $b$ a differentiable function on $T^*M_0$. 

We have
\begin{eqnarray*}
[X,\widetilde{Z}]&=&[aC^*,Z+bC^*]\\
&=&a[C^*,Z]+(aC^*(b)-bC^*(a)-Z(a))C^*.
\end{eqnarray*}
According to the second relation from \eqref{III7} for any $Z=Z_i\frac{\overline{\partial}}{\overline{\partial}p_i}\in\Gamma\left(V_{C^*}\right)$, we have
\begin{displaymath}
[C^*,Z]=(C^*(Z_i)-Z_i)\frac{\overline{\partial}}{\overline{\partial}p_i}\in\Gamma\left( V_{C^*}\right),
\end{displaymath}
so $\pi_0[X,\widetilde{Z}]=a[C^*,Z]$. We also have $D^1_XZ=aD^1_{C^*}Z=a[C^*,Z]=\pi_0[X,\widetilde{Z}]$, hence $D^1$ is a basic connection on $V_{C^*}$.

Conversely, by the second relation from \eqref{III7}, in the adapted basis $\left\{\frac{\overline{\partial}}{\overline{\partial}p_i},C^*\right\}$ in $V(T^*M_0)$, every basic connection $D^1$ on $V_{C^*}$ is locally satisfying
\begin{equation}
D^1_{C^*}\frac{\overline{\partial}}{\overline{\partial}p_i}=-\frac{\overline{\partial}}{\overline{\partial}p_i},
\label{V6}
\end{equation}
for any $i=1,\ldots,n$.
  
Now, if \eqref{V6} is satsfied, then  
\begin{displaymath}
D^1_{C^*}Z=C^*(Z_i)\frac{\overline{\partial}}{\overline{\partial}p_i}+Z_iD^1_{C^*}\frac{\overline{\partial}}{\overline{\partial}p_i}=C^*(Z_i)\frac{\overline{\partial}}{\overline{\partial}p_i}-Z_i\frac{\overline{\partial}}{\overline{\partial}p_i}.
\end{displaymath}
Hence the condition \eqref{V6} is equivalent with $D^1_{C^*}Z=[C^*,Z],\quad \forall Z\in \Gamma\left(V_{C^*}\right)$.
\end{proof}

Thus, we have obtained the following locally characterisation:
\begin{proposition}
\label{p5.2}
A connection $D^1$ on $V_{C^*}$ is basic if and only if in an adapted local chart the relation \eqref{V6} holds. 
\end{proposition}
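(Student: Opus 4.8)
The plan is to show that Proposition \ref{p5.2} is essentially a restatement of Proposition \ref{p5.1} once one observes that condition \eqref{V6} is independent of the choice of adapted local chart. The forward implication is already contained in the proof of Proposition \ref{p5.1}: if $\nabla^1$ is basic then, writing out the defining relation \eqref{V5} for $X=C^*$ and $\widetilde{Z}=\overline{\partial}^i$ and using the second bracket relation in \eqref{III7} together with Remark \ref{r3.1}, one gets $\nabla^1_{C^*}\overline{\partial}^i=\pi_0[C^*,\overline{\partial}^i]=\pi_0(-\overline{\partial}^i)=-\overline{\partial}^i$, which is exactly \eqref{V6}. So the only thing that genuinely needs to be said is the converse together with the chart-independence.

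First I would record that \eqref{V6}, written in one adapted chart, actually determines $\nabla^1_{C^*}Z$ for every $Z\in\Gamma(\mathcal{L}_{C^*})$: expanding $Z=Z_i\overline{\partial}^i$ and using the Leibniz rule gives $\nabla^1_{C^*}Z=C^*(Z_i)\overline{\partial}^i-Z_i\overline{\partial}^i=[C^*,Z]$, where the last equality is the second relation in \eqref{III7}. This is precisely the hypothesis of Proposition \ref{p5.1}, so that proposition yields that $\nabla^1$ is basic. Conversely, if $\nabla^1$ is basic, the forward implication above gives \eqref{V6}. Hence ``$\nabla^1$ basic'' $\iff$ ``\eqref{V6} holds in some/every adapted chart.''

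The remaining point — and the only place where one must be a little careful — is to check that the validity of \eqref{V6} does not depend on which adapted chart is used, so that the phrasing ``in an adapted local chart'' in the statement is meaningful. For this I would use the transformation rule \eqref{III5}, $\frac{\overline{\partial}}{\overline{\partial}\widetilde{p}_i}=\frac{\partial\widetilde{x}^i}{\partial x^k}\frac{\overline{\partial}}{\overline{\partial}p_k}$, noting that the coefficients $\frac{\partial\widetilde{x}^i}{\partial x^k}$ depend only on the base coordinates $x$ and therefore satisfy $C^*\!\left(\frac{\partial\widetilde{x}^i}{\partial x^k}\right)=0$. Then, assuming \eqref{V6} in the chart $(U,(x^i,p_i))$ and using the Leibniz rule for $\nabla^1$ along $C^*$,
\begin{displaymath}
\nabla^1_{C^*}\frac{\overline{\partial}}{\overline{\partial}\widetilde{p}_i}
=C^*\!\left(\frac{\partial\widetilde{x}^i}{\partial x^k}\right)\frac{\overline{\partial}}{\overline{\partial}p_k}
+\frac{\partial\widetilde{x}^i}{\partial x^k}\,\nabla^1_{C^*}\frac{\overline{\partial}}{\overline{\partial}p_k}
=0-\frac{\partial\widetilde{x}^i}{\partial x^k}\frac{\overline{\partial}}{\overline{\partial}p_k}
=-\frac{\overline{\partial}}{\overline{\partial}\widetilde{p}_i},
\end{displaymath}
so \eqref{V6} holds in $(\widetilde{U},(\widetilde{x}^i,\widetilde{p}_i))$ as well. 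Combining this chart-independence with the equivalence established via Proposition \ref{p5.1} completes the proof. I expect no serious obstacle here; the main subtlety is simply to make explicit that the basic-connection condition is a pointwise/local one and that \eqref{III5} together with $C^*$-invariance of the Jacobian entries is what makes the local characterization globally well posed.
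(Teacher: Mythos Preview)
Your proposal is correct and follows essentially the same approach as the paper: the paper treats Proposition \ref{p5.2} as an immediate consequence of the equivalence already established inside the proof of Proposition \ref{p5.1}, together with the one-line remark that, by the transformation rule \eqref{III5}, condition \eqref{V6} has geometric meaning (i.e., is chart-independent). Your write-up simply makes this chart-independence step explicit, which is a faithful expansion of what the paper leaves to the reader.
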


Now, by \eqref{V2}, a connection $D^2$ on $H(T^*M_0)$ is  basic with respect to the vertical foliation $\mathcal{F}_V$ if 
\begin{equation}
D^2_XY=\pi_1[X,\widetilde{Y}],
\label{V7}
\end{equation}
for any $X\in \Gamma(V(T^*M_0))$ and $\widetilde{Y}\in\Gamma(T(T^*M_0))$ such that $\pi_1(\widetilde{Y})=Y$.
\begin{proposition}
\label{p5.3}
A connection $D^2$ on $H(T^*M_0)$ is basic if and only if in an adapted local frame $\left\{\frac{\delta}{\delta x^i},\frac{\partial}{\partial p_i}\right\}$ on $T(T^*M_0)$ we have 
\begin{displaymath}
D^2_{\frac{\partial}{\partial p_j}}\frac{\delta}{\delta x^i}=0,
\end{displaymath}
for any $i,j=1,\ldots,n$.
\end{proposition}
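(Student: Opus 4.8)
The plan is to unwind the defining property \eqref{V7} of a basic connection on the normal bundle $H(T^*M_0)\cong Q\mathcal{F}_V$ and test it on the adapted frame $\{\delta_i,\partial^i\}$. Via the identification $Q\mathcal{F}_V\cong H(T^*M_0)$ the projection $\pi_1$ is the projection of $T(T^*M_0)=H(T^*M_0)\oplus V(T^*M_0)$ onto its first summand along the vertical distribution, so $\pi_1(\delta_i)=\delta_i$ and $\pi_1(\partial^j)=0$ for all $i,j$, and $\delta_i$ is itself a lift of $\delta_i\in\Gamma(H(T^*M_0))$.

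The one computation that drives the proof is that, since $\delta_i=\frac{\partial}{\partial x^i}+N_{ij}\frac{\partial}{\partial p_j}$ while $\partial^k=\frac{\partial}{\partial p_k}$ is a coordinate vector field,
\begin{equation}
[\partial^k,\delta_i]=\frac{\partial N_{ij}}{\partial p_k}\,\partial^j\in\Gamma(V(T^*M_0)),
\end{equation}
and therefore $\pi_1[\partial^k,\delta_i]=0$. This is the only place where the shape of the nonlinear connection enters, and it is the (mild) crux of the argument: the bracket of a vertical coordinate field with a frame field of $H(T^*M_0)$ stays vertical. Together with the elementary fact that $[\partial^k,Z_j\partial^j]=(\partial^k Z_j)\partial^j$ is vertical for any functions $Z_j$, this shows that $\pi_1$ annihilates every bracket of a vertical field against a vertical field and against the $\delta_i$.

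For the direct implication I would simply substitute $X=\partial^k\in\Gamma(V(T^*M_0))$ and $\widetilde Y=\delta_i$ (a lift of $\delta_i$) into \eqref{V7}, obtaining $\nabla^2_{\partial^k}\delta_i=\pi_1[\partial^k,\delta_i]=0$. For the converse, assuming $\nabla^2_{\partial^k}\delta_i=0$, I would take an arbitrary vertical $X=X_k\partial^k$ and an arbitrary lift $\widetilde Y=Y^i\delta_i+Z_j\partial^j$ of $Y=Y^i\delta_i\in\Gamma(H(T^*M_0))$. Expanding with the Leibniz rule gives $\nabla^2_XY=X_k(\partial^k Y^i)\delta_i$, while expanding $[X,\widetilde Y]$ and retaining only the horizontal component (all remaining terms being vertical by the previous paragraph) gives $\pi_1[X,\widetilde Y]=X_k(\partial^k Y^i)\delta_i$; the two agree, so \eqref{V7} holds and $\nabla^2$ is basic.

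Finally I would note, as in \cite{C-M}, that the right-hand side of \eqref{V7} is genuinely independent of the chosen lift $\widetilde Y$: replacing $\widetilde Y$ by $\widetilde Y+W$ with $W$ vertical changes the bracket by $\pi_1[X,W]$, which vanishes since $[X,W]$ is a bracket of two vertical fields; hence the basic condition is well posed and the stated equivalence is exactly the reduction of \eqref{V7} to the frame.
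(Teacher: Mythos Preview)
Your proof is correct and follows essentially the same approach as the paper: both directions are handled by computing $[\partial^k,\delta_i]=\frac{\partial N_{ij}}{\partial p_k}\partial^j\in\Gamma(V(T^*M_0))$ for the forward implication, and by expanding $\nabla^2_XY$ and $[X,\widetilde{Y}]$ in the adapted frame for the converse. Your version adds the explicit remark on lift-independence, while the paper instead notes the chart-independence of the condition $\nabla^2_{\partial^j}\delta_i=0$; otherwise the arguments coincide.
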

\begin{proof} 
If $D^2$ is a basic connection with respect to the vertical foliation, then by definition it results 
\begin{displaymath}
D^2_{\frac{\partial}{\partial p_j}}\frac{\delta}{\delta x^i}=\pi_1\left[\frac{\delta}{\delta x^i},\frac{\partial}{\partial p_j}\right]=\pi_1\left(-\frac{\partial N_{ki}}{\partial p_j}\frac{\partial}{\partial p_k}\right)=0.
\end{displaymath}
Conversely, let $D^2:T(T^*M_0)\times H(T^*M_0)\rightarrow H(T^*M_0)$ be a connection on $H(T^*M_0)$ which locally satisfies 
$
D^2_{\frac{\partial}{\partial p_j}}\frac{\delta}{\delta x^i}=0,
$
for any $i,j=1,\ldots,n$.

An arbitrary vertical vector field $X$ has local expression $X=X_i\frac{\partial}{\partial p_i}$ and a vector field $\widetilde{Y}$ whose horizontal projection is $Y=Y_h^i\frac{\delta}{\delta x^i}$ is by the form $\widetilde{Y}=Y+Y_i^v\frac{\partial}{\partial p_i}$. 

We calculate
\begin{displaymath}
D^2_XY=X_iD^2_{\frac{\partial}{\partial p_i}}\left(Y_h^j\frac{\delta}{\delta  x^j}\right)=X_i\frac{\partial Y_h^j}{\partial p_i}\frac{\delta}{\delta  x^j},
\end{displaymath}
\begin{displaymath}
\left[X,\widetilde{Y}\right]=X_i\frac{\partial Y_h^j}{\partial p_i}\frac{\delta}{\delta  x^j}+\left(X_i\frac{\partial Y_v^j}{\partial p_i}-Y_h^i\frac{\delta X_j}{\delta x^i}\right)\frac{\partial}{\partial p_j}+X_iY^j_h\left[\frac{\partial}{\partial p_i},\frac{\delta}{\delta  x^j}\right],
\end{displaymath}
hence the relation \eqref{V7} is verified, since $\left[\frac{\partial}{\partial p_i},\frac{\delta}{\delta  x^j}\right]\in \Gamma(V(T^*M_0))$. So, $D^2$ is a basic connection with respect to the vertical foliation $\mathcal{F}_V$.
\end{proof}

Also, by \eqref{V2}, a connection $D$ on $\{C^*\}^{\perp}$ is basic with respect to the line foliation $\mathcal{F}_{C^*}$ if 
\begin{equation}
D_XY=\pi_2[X,\widetilde{Y}],
\label{V8}
\end{equation}
for any $X\in \Gamma(\{C^*\})$ and $\widetilde{Y}\in \Gamma(T(T^*M_0))$ such that $\pi_2(\widetilde{Y})=Y$.

We have the following locally characterisation of a basic connection on $\{C^*\}^{\perp}$:

\begin{proposition}
\label{p5.4}
A connection $D$ on $\{C^*\}^{\perp}$ is basic with respect to the line foliation $\mathcal{F}_{C^*}$ if and only if in an adapted local frame $\left\{\frac{\delta}{\delta x^i},\frac{\overline{\partial}}{\overline{\partial}p_i}\right\}$ on $\{C^*\}^{\perp}$ we have 
\begin{equation}
D_{C^*}\frac{\delta}{\delta x^i}=0,\quad D_{C^*}\frac{\overline{\partial}}{\overline{\partial}p_i}=-\frac{\overline{\partial}}{\overline{\partial}p_i},
\label{V9}
\end{equation}
for any $i=1,\ldots,n$.
\end{proposition}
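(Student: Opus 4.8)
The plan is to mimic the structure of the proofs of Propositions \ref{p5.1}--\ref{p5.3}: translate the defining condition \eqref{V8} of a basic connection on $\{C^*\}^{\perp}$ into the adapted frame $\left\{\delta_i,\overline{\partial}^i\right\}$ (using Remark \ref{r3.1} to work with all $n$ indices rather than $n-1$), and then show it is equivalent to the two local identities in \eqref{V9}. The two conditions correspond exactly to the two types of generators of $\{C^*\}^{\perp}$, namely the horizontal vectors $\delta_i$ and the Liouville-orthogonal vertical vectors $\overline{\partial}^i$; so the computation splits cleanly into a horizontal half and a vertical half.

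For the forward implication, I would take $X\in\Gamma(\{C^*\})$, so $X=aC^*$ for some function $a$, and compute $\pi_2[C^*,\delta_i]$ and $\pi_2[C^*,\overline{\partial}^i]$. For the horizontal half, one needs the bracket $[C^*,\delta_i]=p_j[\partial^j,\delta_i]=-p_j\frac{\partial N_{ki}}{\partial p_j}\partial^k$, which is purely vertical, hence $\pi_2[C^*,\delta_i]=0$; this gives $\nabla_{C^*}\delta_i=0$. For the vertical half, the second relation of \eqref{III7} gives $[C^*,\overline{\partial}^i]=-\overline{\partial}^i$, which already lies in $\{C^*\}^{\perp}$, so $\pi_2[C^*,\overline{\partial}^i]=-\overline{\partial}^i$, yielding the second identity of \eqref{V9}. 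The scaling by $a$ and the freedom in lifting $\widetilde Y$ are handled exactly as in the proof of Proposition \ref{p5.1}: writing $\widetilde Y=Y+fC^*$ produces extra terms proportional to $C^*$ which are killed by $\pi_2$.

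For the converse, I would start from a connection $\nabla$ on $\{C^*\}^{\perp}$ satisfying \eqref{V9}, write an arbitrary section of $\{C^*\}^{\perp}$ as $Y=Y^i_h\delta_i+Y_i\overline{\partial}^i$ and an arbitrary lift as $\widetilde Y=Y+fC^*$, and an arbitrary section of $\{C^*\}$ as $X=aC^*$. Expanding $\nabla_XY=a\nabla_{C^*}Y$ via the Leibniz rule and \eqref{V9} gives $a\bigl(C^*(Y^i_h)\delta_i+(C^*(Y_i)-Y_i)\overline{\partial}^i\bigr)$; expanding $[X,\widetilde Y]=[aC^*,Y+fC^*]$ and applying $\pi_2$ (which annihilates the $C^*$-component and, by the horizontal computation above, also the vertical $\partial^k$-terms coming from $[C^*,\delta_i]$) gives the same expression. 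Hence \eqref{V8} holds and $\nabla$ is basic. One should also note, as in Proposition \ref{p5.2}, that the conditions \eqref{V9} are chart-independent: this follows from the transformation rule $\frac{\delta}{\delta\widetilde x^{i_1}}=\frac{\partial x^i}{\partial\widetilde x^{i_1}}\frac{\delta}{\delta x^i}$ together with \eqref{III5}, since $C^*$ acts on the constant-in-$p$ Jacobian entries trivially in the horizontal case and the $-\overline{\partial}^i$ term is homogeneous of the right degree in the vertical case.

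The only mildly delicate point — the ``main obstacle'' — is verifying that $\pi_2[C^*,\delta_i]=0$, i.e.\ that the bracket $[C^*,\delta_i]$ is vertical. This is where one uses that $N_{ij}$ are homogeneous of degree $1$ in the momenta (stated after \eqref{I9}): then $p_j\frac{\partial N_{ki}}{\partial p_j}=N_{ki}$ by Euler's theorem, so $[C^*,\delta_i]=-N_{ki}\partial^k$, which is manifestly a section of $V(T^*M_0)\subset\ker\pi_2$. Everything else is a bookkeeping exercise paralleling the preceding propositions, so I would keep it brief in the write-up.
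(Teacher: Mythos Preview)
Your overall strategy matches the paper's proof, but the ``main obstacle'' computation contains a genuine error that breaks both directions as written. You expand $[C^*,\delta_i]=p_j[\partial^j,\delta_i]$, dropping the term where $\delta_i$ differentiates the coefficient $p_j$: the Leibniz rule gives $[p_j\partial^j,\delta_i]=p_j[\partial^j,\delta_i]-\delta_i(p_j)\,\partial^j$, and $\delta_i(p_j)=N_{ij}$. With this term restored and the homogeneity $C^*(N_{ij})=N_{ij}$, one obtains $[C^*,\delta_i]=(C^*(N_{ij})-N_{ij})\partial^j=0$ identically, which is precisely what the paper uses (and records explicitly in the converse computation: ``since $[C^*,\delta_i]=0$'').

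The more serious slip is the claim ``purely vertical, hence $\pi_2[C^*,\delta_i]=0$'' and, in the last paragraph, the inclusion $V(T^*M_0)\subset\ker\pi_2$. This is false: $\pi_2$ is the projection onto $\{C^*\}^{\perp}$ with kernel the \emph{line} $\{C^*\}$, not the whole vertical bundle. A generic vertical vector decomposes as $\partial^k=\overline{\partial}^k+t^kC^*$, so $\pi_2(\partial^k)=\overline{\partial}^k\neq 0$. Had the bracket really been $-N_{ki}\partial^k$ as you write, its $\pi_2$-image would be $-N_{ki}\overline{\partial}^k$, which is nonzero in general, and \eqref{V9} would not follow. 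The same misreading of $\pi_2$ reappears in your converse argument when you say $\pi_2$ ``annihilates \ldots\ the vertical $\partial^k$-terms coming from $[C^*,\delta_i]$''. Once you correct the Leibniz expansion and see that $[C^*,\delta_i]=0$, both implications go through exactly along the lines you sketch and coincide with the paper's proof.
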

\begin{proof} Let $D$ be a basic connection on $\{C^*\}^{\perp}$. Since $\{C^*\}^{\perp}$ is locally generated by $\left\{\frac{\delta}{\delta x^i},\frac{\overline{\partial}}{\overline{\partial}p_i}\right\}$, the condition \eqref{V8} give us the following relations:
\begin{displaymath}
D_{C^*}\frac{\delta}{\delta x^i}=\pi_2\left[C^*,\frac{\delta}{\delta x^i}+fC^*\right]=\pi_2\left((-N_{ji}+C^*(N_{ji}))\frac{\partial}{\partial p_j}+C^*(f)C^*\right)=0,
\end{displaymath}
since $C^*(N_{ji})=N_{ji}$, by the homogeneity of degree 1 in momenta of functions $N_{ji}$, see \cite{M-H-S}, and
\begin{displaymath}
D_{C^*}\frac{\overline{\partial}}{\overline{\partial}p_i}=\pi_2\left[C^*,\frac{\overline{\partial}}{\overline{\partial}p_i}+fC^*\right]=\pi_2\left(-\frac{\overline{\partial}}{\overline{\partial}p_i}+C^*(f)C^*\right)=-\frac{\overline{\partial}}{\overline{\partial}p_i}.
\end{displaymath}
Conversely, let us consider $D:T(T^*M_0)\times \{C^*\}^{\perp}\rightarrow \{C^*\}^{\perp}$ be a connection on $\{C^*\}^{\perp}$ which locally satisfies \eqref{V9}.

An arbitrary vector field $Y\in \Gamma\left(\{C^*\}^{\perp}\right)$ is locally given by $Y=Y_h^i\frac{\delta}{\delta x^i}+Y_i\frac{\overline{\partial}}{\overline{\partial}p_i}$ and a vector field $\widetilde{Y}\in \Gamma(T(T^*M_0))$ which projects by $\pi_2$ in $Y$ is $\widetilde{Y}=fC^*+Y$. For an arbitrary vector field $X=aC^*\in \Gamma(\{C^*\})$, we calculate

\begin{eqnarray*}
D_{X}Y&=&D_{aC^*}\left(Y_h^i\frac{\delta}{\delta x^i}+Y_i\frac{\overline{\partial}}{\overline{\partial}p_i}\right)\\
&=&aC^*(Y_h^i)\frac{\delta}{\delta x^i}+aY_h^iD_{C^*}\frac{\delta}{\delta x^i}+aC^*(Y_i)\frac{\overline{\partial}}{\overline{\partial}p_i}+aY_iD_{C^*}\frac{\overline{\partial}}{\overline{\partial}p_i}\\
&=&aC^*(Y_h^i)\frac{\delta}{\delta x^i}+a(C^*(Y_i)-Y_i)\frac{\overline{\partial}}{\overline{\partial}p_i},
\end{eqnarray*}
and
\begin{eqnarray*}
\pi_2[X,\widetilde{Y}]&=&\pi_2\left(aC^*(f)C^*-fC^*(a)C^*+aC^*(Y_h^i)\frac{\delta}{\delta x^i}+aY^i_h\left[C^*,\frac{\delta}{\delta x^i}\right]\right)\\
&&+\pi_2\left(a(C^*(Y_i)-Y_i)\frac{\overline{\partial}}{\overline{\partial}p_i}-Y^i_h\frac{\delta a}{\delta x^i}C^*-Y_i\frac{\overline{\partial}a}{\overline{\partial} p_i}C^*\right)\\
&=&aC^*(Y_h^i)\frac{\delta}{\delta x^i}+a(C^*(Y_i)-Y_i)\frac{\overline{\partial}}{\overline{\partial}p_i},
\end{eqnarray*}
since $\left[C^*,\frac{\delta}{\delta x^i}\right]=0$. Thus, we have obtained that the connection $D$ is a basic one.
\end{proof} 
 
\subsection{A triple of adapted basic connections to subfoliation $(\mathcal{F}_V,\mathcal{F}_{C^*})$}The restriction of Vr\u{a}nceanu connection $\nabla^{\prime*}$ from subsection 4.1, to $\Gamma(T(T^*M_0))\times \Gamma(H(T^*M_0))$, is a connection on $H(T^*M_0)$ denoted by $\overline{\nabla}^{\prime*}$, which satisfies the conditions from Proposition \ref{p5.3}, (see relations \eqref{IV3}), so it is a basic connection on $H(T^*M_0)$ with respect to vertical foliation $\mathcal{F}_V$. This fact follows also from Remark 3.1 since the restriction of Vr\u{a}nceanu connection $\nabla^{\prime*}$ on $H(T^*M_0)$ is the Bott connection on $H(T^*M_0)$.

Also, the restriction of the Vr\u{a}nceanu connection $\nabla^{\prime\prime*}$ from the third section to $V_{C^*}$,
induces a connection $\overline{\nabla}^{\prime\prime*}$ on $V_{C^*}$, which satisfies the conditions from Proposition \ref{p5.2}, (see relations \eqref{IV4}), so it is a basic connection on $V_{C^*}$ with respect to the subfoliation $(\mathcal{F}_V,\mathcal{F}_{C^*})$.

Hence we have the basic connection $\overline{\nabla}^{\prime*}$ on $H(T^*M_0)$ with respect to vertical foliation $\mathcal{F}_V$, and the basic connection $\overline{\nabla}^{\prime\prime*}$ on $V_{C^*}$ with respect to the subfoliation $(\mathcal{F}_V,\mathcal{F}_{C^*})$. Following \cite{C-M}, we can build now a connection on $\{C^*\}^{\perp}$ as follows:
\begin{displaymath}
\overline{\nabla}:T(T^*M_0)\times \{C^*\}^{\perp}\rightarrow \{C^*\}^{\perp}\,,\,\overline{\nabla}_XZ=\overline{\nabla}^{\prime*}_{X}Z^h+\overline{\nabla}^{\prime\prime*}_XZ',
\end{displaymath}
for any $Z=Z^h+Z'\in\Gamma\left( \{C^*\}^{\perp}\right)=\Gamma\left(H(T^*M_0)\right)\oplus\Gamma\left(V_{C^*}\right)$ and $X\in \Gamma(T(T^*M_0))$.

By direct calculus we have
\begin{displaymath}
\overline{\nabla}_{C^*}\frac{\delta}{\delta x^i}=\overline{\nabla}^{\prime*}_{C^*}\frac{\delta}{\delta x^i}=0, \quad \overline{\nabla}_{C^*}\frac{\overline{\partial}}{\overline{\partial}p_i}=\overline{\nabla}^{\prime\prime*}_{C^*}\frac{\overline{\partial}}{\overline{\partial}p_i}=-\frac{\overline{\partial}}{\overline{\partial}p_i},
\end{displaymath}
so the connection $\overline{\nabla}$ satisfies conditions from Proposition \ref{p5.4}, hence it is a basic connection on $\{C^*\}^{\perp}$ with respect to the line foliation $\mathcal{F}_{C^*}$. 

The whole expression of the adapted basic connection $\overline{\nabla}$ is given by
\begin{eqnarray*}
\overline{\nabla}_{\frac{\overline{\partial}}{\overline{\partial}p_i}}\frac{\delta}{\delta x^j}&=&\nabla^{\prime*}_{\frac{\overline{\partial}}{\overline{\partial}p_i}}\frac{\delta}{\delta x^j}=0\,,\,\overline{\nabla}_{\frac{\delta}{\delta x^i}}\frac{\delta}{\delta x^j}=\nabla^{\prime*}_{\frac{\delta}{\delta x^i}}\frac{\delta}{\delta x^j}=H^k_{ij}\frac{\delta}{\delta x^k},\\
\overline{\nabla}_{\frac{\overline{\partial}}{\overline{\partial}p_i}}\frac{\overline{\partial}}{\overline{\partial}p_j}&=&\nabla^{\prime\prime*}_{\frac{\overline{\partial}}{\overline{\partial}p_i}}\frac{\overline{\partial}}{\overline{\partial}p_j}=-\frac{1}{K^2}\left(P^{i}_lp^j+h^{ij}p_l+K^2P^{i}_kP^j_mC^{km}_l\right)\frac{\overline{\partial}}{\overline{\partial}p_l},\\
\overline{\nabla}_{\frac{\delta}{\delta x^{i}}}\frac{\overline{\partial}}{\overline{\partial}p_j}&=&\nabla^{\prime\prime*}_{\frac{\delta}{\delta x^{i}}}\frac{\overline{\partial}}{\overline{\partial}p_j}=\left(\frac{\delta P^j_k}{\delta x^{i}}-P^j_l N^l_{ik}\right)\frac{\overline{\partial}}{\overline{\partial}p_k}.
\end{eqnarray*}

Now, for the $(n,2n-1)$--codimensional subfoliation $(\mathcal{F}_V,\mathcal{F}_{C^*})$ we can consider the exact sequence \eqref{V1}, namely
\begin{displaymath}
0\longrightarrow V_{C^*}\stackrel{i}{\longrightarrow}\{C^*\}^{\perp}\stackrel{\pi}{\longrightarrow}H(T^*M_0)\longrightarrow0,
\end{displaymath}
and the general theory of $(q_1,q_2)$--codimensional foliations, see \cite{C-M}, leads to the following results:
\begin{proposition}
\label{p5.5}
For the triple basic connections $(\overline{\nabla}^{\prime\prime*},\overline{\nabla}^{\prime*},\overline{\nabla})$ on $V_{C^*}$, $H(T^*M_0)$ and $\{C^*\}^{\perp}$, respectively, we have
\begin{displaymath}
i\left(\overline{\nabla}^{\prime\prime*}_XY\right)=\overline{\nabla}_Xi(Y)\,,\,\pi\left(\overline{\nabla}_XZ\right)=\overline{\nabla}^{\prime*}_{X}\pi(Z)\,,\,\,\forall\,Y\in\Gamma\left(V_{C^*}\right),\,Z\in\Gamma\left(\{C^*\}^{\perp}\right).
\end{displaymath}
\end{proposition}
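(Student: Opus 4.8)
The plan is to verify the two intertwining identities directly on the adapted local frame, using the explicit local coefficients of the three connections that have already been computed. The first identity concerns the injection $i\colon\mathcal{L}_{C^*}\hookrightarrow\{C^*\}^{\perp}$; since $\overline{\nabla}$ was \emph{defined} by $\overline{\nabla}_XZ=\check{\nabla}_{1X}Z^h+\nabla^v_XZ'$ according to the splitting $\{C^*\}^{\perp}=H(T^*M_0)\oplus\mathcal{L}_{C^*}$, for a section $Y\in\Gamma(\mathcal{L}_{C^*})$ its image $i(Y)$ has trivial $H(T^*M_0)$-component, so $\overline{\nabla}_Xi(Y)=\nabla^v_XY$, which is exactly $i(\nabla^v_XY)$ once we suppress the identification. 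First I would write $Y=Y_a\overline{\partial}^a$ and expand both sides over $X$ ranging through $\{\delta_i,\overline{\partial}^i,C^*\}$, checking that every term produced by $\overline{\nabla}$ lands in $\mathcal{L}_{C^*}$ and reproduces $\nabla^v$; the key input is that $\nabla^v$ restricted to $\mathcal{L}_{C^*}$ is well-defined (relations \eqref{IV8}, \eqref{IV9}, \eqref{IV11}, \eqref{IV12}) and that the $H(T^*M_0)$-part of $\overline{\nabla}$ never feeds back into the $\mathcal{L}_{C^*}$-part.

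For the second identity, I would let $\pi\colon\{C^*\}^{\perp}\to H(T^*M_0)$ be the projection with kernel $\mathcal{L}_{C^*}$, take $Z=Z^h+Z'\in\Gamma(\{C^*\}^{\perp})$, and compute $\pi(\overline{\nabla}_XZ)=\pi(\check{\nabla}_{1X}Z^h+\nabla^v_XZ')$. Since $\nabla^v_XZ'\in\Gamma(\mathcal{L}_{C^*})=\ker\pi$ and $\check{\nabla}_{1X}Z^h\in\Gamma(H(T^*M_0))$, this collapses to $\check{\nabla}_{1X}Z^h=\check{\nabla}_{1X}\pi(Z)$, which is the claim. The only thing to be careful about here is that $\check{\nabla}_1$, being the restriction of the Vr\u{a}nceanu connection to $T(T^*M_0)\times H(T^*M_0)$, genuinely takes values in $H(T^*M_0)$ — this is immediate from \eqref{IV2}, where $\check{\nabla}_{\partial^j}\delta_i=0$ and $\check{\nabla}_{\delta_j}\delta_i=H^k_{ij}\delta_k$ — so no horizontal term acquires a vertical component.

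Both computations are essentially bookkeeping once the local coefficients from Sections~4.1 and 4.2 are in hand; the substance of the proposition is really the \emph{consistency} of the three separately constructed connections, i.e.\ that $\overline{\nabla}$ was built precisely so as to split compatibly along the exact sequence $0\to\mathcal{L}_{C^*}\to\{C^*\}^{\perp}\to H(T^*M_0)\to0$. The main obstacle, such as it is, is purely notational: the statement is phrased using the canonical isomorphisms $Q\mathcal{F}_V\cong H(T^*M_0)$, $V(T^*M_0)/\{C^*\}\cong\mathcal{L}_{C^*}$, $Q\mathcal{F}_{C^*}\cong\{C^*\}^{\perp}$, so one must be disciplined about when $i$ and $\pi$ are the bundle maps of \eqref{V1} versus the concrete inclusion and projection of subbundles of $T(T^*M_0)$; with that identification fixed, the argument reduces to the two one-line observations above. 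I would therefore present the proof by (i) recalling the definition of $\overline{\nabla}$ and the direct-sum decomposition, (ii) checking $\overline{\nabla}_Xi(Y)=i(\nabla^v_XY)$ since $i(Y)$ has zero horizontal part, and (iii) checking $\pi(\overline{\nabla}_XZ)=\check{\nabla}_{1X}\pi(Z)$ since $\pi$ kills $\nabla^v_XZ'$.
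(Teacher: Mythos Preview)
Your proposal is correct. The paper does not give its own proof of this proposition; it simply invokes ``the general theory of $(q_1,q_2)$-codimensional foliations, see \cite{C-M}'' and states the result. Your direct verification from the very definition $\overline{\nabla}_XZ=\check{\nabla}_{1X}Z^h+\nabla^v_XZ'$ is exactly what that general theory amounts to in this instance, and your two one-line observations (that $i(Y)$ has zero horizontal part, and that $\pi$ kills $\nabla^v_XZ'\in\Gamma(\mathcal{L}_{C^*})$) are the entire content of the argument; the local-frame bookkeeping you outline is not even necessary once one notes that $\check{\nabla}_1$ and $\nabla^v$ take values in $H(T^*M_0)$ and $\mathcal{L}_{C^*}$ respectively.
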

\begin{proposition}
\label{p5.6}
The triple of basic connections $(\overline{\nabla}^{\prime\prime*},\overline{\nabla}^{\prime*},\overline{\nabla})$ is adapted to the subfoliation $(\mathcal{F}_V,\mathcal{F}_{C^*})$ of $(T^*M_0,G)$.
\end{proposition}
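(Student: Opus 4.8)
The plan is to read the statement straight off the definition of an adapted triple: one must check that each of $\nabla^v$, $\check{\nabla}_1$, $\overline{\nabla}$ is a basic connection on the appropriate normal bundle, namely $\nabla^v$ on $\mathcal{L}_{C^*}\cong V(T^*M_0)/\{C^*\}$ with respect to the subfoliation $(\mathcal{F}_V,\mathcal{F}_{C^*})$, $\check{\nabla}_1$ on $H(T^*M_0)\cong Q\mathcal{F}_V$ with respect to $\mathcal{F}_V$, and $\overline{\nabla}$ on $\{C^*\}^{\perp}\cong Q\mathcal{F}_{C^*}$ with respect to $\mathcal{F}_{C^*}$, the three identifications being those induced by the metric $G$ and recorded at the beginning of \S5.1. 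All three verifications will be carried out through the \emph{local} characterisations of basic connections established in Propositions \ref{p5.2}, \ref{p5.3} and \ref{p5.4}, so that no direct manipulation of Lie brackets is needed in the proof itself.

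Concretely, I would proceed in three steps. For $\check{\nabla}_1$: reading off the local coefficients of the Vr\u{a}nceanu connection from \eqref{IV2} we have $\check{L}^{kj}_i=0$, i.e. $\check{\nabla}_{\partial^j}\delta_i=0$ for its restriction $\check{\nabla}_1$ to $T(T^*M_0)\times H(T^*M_0)$; by Proposition \ref{p5.3} this is exactly the condition for $\check{\nabla}_1$ to be basic with respect to $\mathcal{F}_V$. For $\nabla^v$: the Vaisman connection of \S4.2 satisfies $\nabla^v_{C^*}\overline{\partial}^a=-\overline{\partial}^a$ by \eqref{IV9}, which is precisely relation \eqref{V6} in an adapted chart, so Proposition \ref{p5.2} gives that the induced connection on $\mathcal{L}_{C^*}$ is basic with respect to the line foliation $\mathcal{F}_{C^*}\subset\mathcal{F}_V$. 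For $\overline{\nabla}$: from its definition $\overline{\nabla}_XZ=\check{\nabla}_{1X}Z^h+\nabla^v_XZ'$ and the two previous facts one gets $\overline{\nabla}_{C^*}\delta_i=\check{\nabla}_{1C^*}\delta_i=0$ and $\overline{\nabla}_{C^*}\overline{\partial}^i=\nabla^v_{C^*}\overline{\partial}^i=-\overline{\partial}^i$, which are exactly relations \eqref{V9}; Proposition \ref{p5.4} then yields that $\overline{\nabla}$ is basic on $\{C^*\}^{\perp}$ with respect to $\mathcal{F}_{C^*}$. Assembling these three items produces the required adapted triple.

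If the convention of \cite{C-M} additionally demands that an adapted triple be compatible with the canonical exact sequence \eqref{V1}, i.e. that the inclusion $i$ and the projection $\pi$ intertwine the three connections, then this is nothing but Proposition \ref{p5.5}, itself already extracted from the defining formula of $\overline{\nabla}$. I do not foresee a genuine obstacle here: every substantive computation has been pushed into \eqref{IV2}, \eqref{IV9} and Propositions \ref{p5.2}--\ref{p5.5}, so the proof of Proposition \ref{p5.6} reduces to bookkeeping. The one point deserving attention is notational consistency --- making sure that in each of the three slots ``basic'' is read relative to the correct (sub)foliation, and that the $G$-induced identifications $\mathcal{L}_{C^*}\cong V(T^*M_0)/\{C^*\}$, $H(T^*M_0)\cong Q\mathcal{F}_V$, $\{C^*\}^{\perp}\cong Q\mathcal{F}_{C^*}$ are used throughout, so that the local criteria of \S5.1 apply verbatim.
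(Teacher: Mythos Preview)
Your proposal is correct and follows essentially the same route as the paper: the text preceding Proposition~\ref{p5.6} verifies that $\check{\nabla}_1$, $\nabla^v$ and $\overline{\nabla}$ are basic by invoking exactly the local criteria of Propositions~\ref{p5.3}, \ref{p5.2} and \ref{p5.4} together with \eqref{IV2} and \eqref{IV9}, and then appeals to the definition of ``adapted'' from \cite{C-M}; Proposition~\ref{p5.6} is stated as an immediate consequence, with the exact-sequence compatibility recorded separately as Proposition~\ref{p5.5}.
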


As in the case of general theory for subfoliations, \cite{C-M}, a possible application of the adapted basic connection constructed here for our subfoliation may be given in the study of characteristic classes of the $(n,2n-1)$--codimensional  subfoliation  $(\mathcal{F}_V,\mathcal{F}_{C^*})$ and if this study can be related with some geometrical  or topological properties of the base Cartan space $(M,K)$.   In the next section we relate some notions about cohomology of the indicatrix (which is in fact the foliated cohomology of the fundamental foliation defined by the distribution $\{C^*\}^\perp$) with a Poincar\'{e}-Cartan $2$--form on $(M,K)$.   

\section{On the cohomology of the $c$-indicatrix cotangent bundle}
\setcounter{equation}{0}
In this section, firstly, using the vertical Liouville-Hamilton vector field $C^*$ and the natural almost complex structure $J$ on $T^*M_0$, we give an adapted basis in $T(T^*M_0)$. Next we prove that the $c$--indicatrix cotangent bundle $I(M,K)(c)$ of $(M,K)$ is a $CR$--submanifold of the almost K\"{a}hlerian manifold $(T^*M_0,G,J)$ and we study some cohomological properties of $I(M,K)(c)$ in relation with classical cohomology of $CR$--submanifolds, \cite{C1}.

For the natural almost complex structure $J$ on $T(T^*M_0)$, we consider now the new local vector field frame in $T(T^*M_0)$ as $\left\{\frac{\overline{\delta}}{\overline{\delta} x_i}, \xi^*,\frac{\overline{\partial}}{\overline{\partial} p_i},C^*\right\}$, where
\begin{equation}
\xi^*=J\left(C^*\right)=p^i\frac{\delta}{\delta x^i}
\label{III9}
\end{equation} 
and
\begin{equation}
\frac{\overline{\delta}}{\overline{\delta} x_i}=J\left(\frac{\overline{\partial}}{\overline{\partial} p_i}\right)=P^i_jg^{jk}\frac{\delta}{\delta x^k}=h^{ki}\frac{\delta}{\delta x^k}=g^{ki}\frac{\delta}{\delta x^k}-\frac{p^i}{K^2}\xi^*.
\label{r4}
\end{equation} 

As in the second section it follows that $H_{\xi^*}:={\rm Span}\left\{\frac{\overline{\delta}}{\overline{\delta} x_1},\ldots,\frac{\overline{\delta}}{\overline{\delta} x_n}\right\}$ is an $(n-1)$--dimensional horizontal sub-distribution, orthogonal to $\left\{\xi^*\right\}$ in $H(T^*M_0)$, where $\left\{\xi^*\right\}$ is the line distribution spanned by the horizontal Liouville-Hamilton vector field $\xi^*$.

Since the vertical Liouville-Hamilton vector field $C^*$ is orthogonal to the level hypersurfaces of the fundamental function $K$, the vector fields $\left\{\frac{\overline{\delta}}{\overline{\delta} x_i}, \xi^*,\frac{\overline{\partial}}{\overline{\partial} p_i}\right\}$ are tangent to these hypersurfaces in $T^*M_0$, so they generate the distribution $\{C^*\}^{\perp}$ which is the orthogonal complement of $\{C^*\}$ in $T(T^*M_0)$. The vertical indicatrix (Liouville) distribution $V_{C^*}$ is locally generated by $\left\{\frac{\overline{\partial}}{\overline{\partial} p_i}\right\}$,  and  the vertical foliation has the structural bundle locally generated by $\left\{\frac{\overline{\partial}}{\overline{\partial} p_i},C^*\right\}$, $i=1,\ldots,n$. Also, we have the decomposition
\begin{equation}
\{C^*\}^{\perp}=\left\{\xi^*\right\}\oplus H_{\xi^*}\oplus V_{C^*}.
\label{III10}
\end{equation}
For any $c>0$, we consider now the \textit{$c$--indicatrix cotangent bundle} over $M$, given by 
\begin{displaymath}
I(M,K)(c)=\bigcup_{x\in M}I_x(M,K)(c)\,\,,\,\,I_x(M,K)(c)=\{p\in T_x^*M_0\,:\,K(x,p)=c\}.
\end{displaymath}

According to \cite{Be1, B-F}, if $(\widetilde{N},\widetilde{g},\widetilde{J})$ is an (almost) K\"{a}hler manifold, where $\widetilde{g}$ is the Riemannian metric and $\widetilde{J}$ is the (almost) complex structure on $\widetilde{N}$, then $N$ is a \textit{$CR$-submanifold} of $\widetilde{N}$ if $N$ admits two complementary orthogonal distributions $D$ and $D^{\perp}$ such that
\begin{enumerate}
\item[i)] $D$ is $\widetilde{J}$--invariant, i.e., $\widetilde{J}(D)\subset D$;
\item[ii)] $D^{\perp}$ is $\widetilde{J}$--anti-invariant, i.e., $\widetilde{J}\left(D^{\perp}\right)\subset (TN)^{\perp}$, where $(TN)^\perp$ is the orthogonal complement of $TN$ in $T\widetilde{N}$.
\end{enumerate}
$D$ is called \textit{maximal complex (holomorphic)} distribution of $N$ and $D^{\perp}$ is called \textit{totally real} distribution of $N$.

We have
\begin{proposition}
Let $i:I(M,K)(c)\hookrightarrow T^*M_0$ be the imersion of $I(M,K)(c)$ in $T^*M_0$. Then $I(M,K)(c)$ is a $CR$--submanifold of $T^*M_0$ with holomorphic distribution given by $D=H_{\xi^*}\oplus V_{C^*}$ and the totally real distribution given by $D^{\perp}=\{\xi^*\}$.
\end{proposition}
\begin{proof}
We have that $\{C^*\}^{\perp}=\left\{\xi^*\right\}\oplus H_{\xi^*}\oplus V_{C^*}$ is the tangent bundle of $I(M,K)(c)$. Taking into account the behaviour of the almost complex structure $J$ of $(T^*M_0,G)$ we have
\begin{displaymath}
J\left(H_{\xi^*}\oplus V_{C^*}\right)=V_{C^*}\oplus H_{\xi^*}\,,\,J\left(\{\xi^*\}\right)=\{C^*\}=\left(\{C^*\}^{\perp}\right)^{\perp}
\end{displaymath}
which end's the proof.
\end{proof}

We recall that a $r$-dimensional distribution $D$ on a Riemannian manifold $(M, g)$ is
minimal if the mean-curvature vector field $H$ of $D$ vanishes indentically, where
\[H =\frac{1}{r}\sum_{i=1}^r(\nabla_{X_i}X_i)^\perp
,\]where $\nabla$ is the Levi-Civita connection on $(M, g)$, $\{X_1,\ldots,X_r\}$, is an orthonormal frame
of $D$, and $(\nabla_XY)^\perp$ denotes the component of $\nabla_XY $ in the orthogonal complementary
distribution $D^\perp$ of $D$ in $TM$.

It is well known, see \cite{C1}, that the totally real distribution of a $CR$--submanifold of an (almost) K\"{a}hler manifold is integrable and its maximal complex (holomorphic) distribution is minimal. Then we obviously have that
the line distribution $\{\xi^*\}$ is integrable, and the distribution $H_{\xi^*}\oplus V_{C^*}$ is minimal.

Let $\omega_i$  be the dual $1$--forms of the vertical vector fields $\frac{\overline{\partial}}{\overline{\partial}p_i}$ and $\theta_i$ be the dual $1$--forms of the horizontal vector fields $\frac{\overline{\delta}}{\overline{\delta}x_i}$, that is $\omega_i\left(\frac{\overline{\partial}}{\overline{\partial}p_j}\right)=\delta^j_i$ and $\theta_i\left(\frac{\overline{\delta}}{\overline{\delta}x_j}\right)=\delta^j_i$, respectively. It is easy to see that we have the following relations:
\begin{equation}
\label{r5}
\delta p_i=P^j_i\omega_j\,\,\,{\rm and}\,\,\,dx^i=h^{ij}\theta_j.
\end{equation}

As we already noticed the vertical vector fields $\left\{\frac{\overline{\partial}}{\overline{\partial}p_i}\right\}$, $i=1,\ldots,n$ are linear dependent and we consider the linear independent system $\left\{\frac{\overline{\partial}}{\overline{\partial}p_1},\ldots,\frac{\overline{\partial}}{\overline{\partial}p_{i-1}},\frac{\overline{\partial}}{\overline{\partial}p_{i+1}},\ldots,\frac{\overline{\partial}}{\overline{\partial}p_{n}}\right\}$ that generates $V_{C^*}$. Consequently, by means of $J$, we get the linear independent system of horizontal vector fields $\left\{\frac{\overline{\delta}}{\overline{\delta} x_1},\ldots,\frac{\overline{\delta}}{\overline{\delta} x_{i-1}},\frac{\overline{\delta}}{\overline{\delta} x_{i+1}},\ldots,\frac{\overline{\delta}}{\overline{\delta} x_n}\right\}$  that generates $H_{\xi^*}$

Then the general theory for cohomology of $CR$-submanifolds of an (almost) K\"{a}hler manifolds, \cite{C1}, leads to
\begin{theorem}
The differential form
\begin{displaymath}
\nu=\omega_1\wedge\ldots\wedge\widehat{\omega_{i}}\wedge\ldots\wedge\omega_n\wedge\theta_{1}\wedge\ldots\wedge\widehat{\theta_{i}}\wedge\ldots\wedge\theta_n
\end{displaymath}
is closed and it defines a cohomology class
\begin{equation}
[\nu]\in H^{2n-2}\left(I(M,K)(c)\right).
\label{III11}
\end{equation}
\end{theorem}
\begin{definition}
The cohomology class $[\nu]$ is called the canonical class of the $c$--indicatrix cotangent bundle $I(M,K)(c)$ of a Cartan space $(M,K)$.
\end{definition}
\begin{remark}
The form $\nu$ which defines the canonical class can be expressed in the form 
\begin{displaymath}
\nu=\frac{(-1)^{n-1}}{(n-1)!}\left(i^*\Omega\right)^{n-1},
\end{displaymath}
where $\Omega$ is the fundamental form given in \eqref{I12}.
\end{remark}
Since $I(M,K)(c)$ is compact when $M$ is compact, acording to \cite{C1} we have
\begin{corollary}
If the cohomology groups $H^{2k}(I(M,K)(c))=0$ for some $k<n$ then either holomorphic distribution $H_{\xi^*}\oplus V_{C^*}$ is not integrable or its totally real distribution $\{\xi^*\}$ is not minimal.
\end{corollary}

We notice that the Poincar\'{e}-Cartan $2$--form associated to a Finsler function has rank $2n-2$ and it play an important role in projective metrizability problem in Finsler geometry, see \cite{B-Mu, C-M-S}.  In the end of this section we prove that the differential form $\nu$ that represents the canonical class of the  $c$--indicatrix cotangent bundle $I(M,K)(c)$ of a Cartan space $(M,K)$ can be related in terms of the Poincar\'{e}-Cartan $2$--form associated to the Cartan function $K$ which is defined as in Finsler geometry. More exactly, using the Fr\"{o}licher-Nijenhuis formalism, for the given complex structure $J$ from \eqref{I11} we consider the differential $d_J=i_J\circ d-d\circ i_J$, where $J$ is considered as a vector valued $1$--differential form. Then, using $dK=\zeta$, by direct calculus we obtain
\begin{displaymath}
d_JK=i_JdK=i_J\zeta=-g_{ij}\zeta^idx^j=-\frac{p_j}{K}dx^j.
\end{displaymath}
 Now, using $d_hK=0$ we obtain the following Poincar\'{e}-Cartan $2$--form associated to the Cartan function $K$  
\begin{equation}
dd_JK=-\frac{1}{K}P^i_j\delta p_i\wedge dx^j.
\label{r6}
\end{equation}
Using the relations \eqref{r5} and the fact that $P^i_jP^l_i=P^l_j$ and $P^l_jh^{jk}=h^{kl}$, we have the following expression of the Poincar\'{e}-Cartan $2$--form
\begin{equation}
\label{r7}
dd_JK=-\frac{1}{K}h^{lk}\omega_l\wedge\theta_k.
\end{equation}
Finally, taking the $n-1$ power of the above $2$-form, we get
\begin{displaymath}
\nu=\frac{(-1)^{\frac{n(n-1)}{2}}K^{n-1}}{(n-1)!\det \widetilde{H}}(dd_JK)^{n-1},
\end{displaymath}
where $\widetilde{H}=(h^{kl})$, $l,k\in\{1,\ldots,i-1,i+1,\ldots,n\}$.

\section*{Acknowledgment}
The authors cordially thank to Referee(s) for several useful remarks and suggestions about the initial Submissions which improve substantially the presentation and the contents of this paper.

\noindent
Cristian Ida and Adelina Manea\\
Department of Mathematics and Computer Science\\
University Transilvania of Bra\c{s}ov\\
Address: Bra\c{s}ov 500091, Str. Iuliu Maniu 50, Rom\^{a}nia\\
email: \textit{cristian.ida@unitbv.ro; amanea28@yahoo.com}\\
\\

\smallskip

\end{document}